\newtheorem{theorem}{Theorem}[section]
\newtheorem{lemma}[theorem]{Lemma}
\newtheorem{proposition}[theorem]{Proposition}
\newtheorem{corollary}[theorem]{Corollary}
\newtheorem*{theorem*}{Theorem} \theoremstyle{remark}
\newtheorem{remark}[theorem]{Remark}
\newtheorem{definition}[theorem]{Definition}
\newtheorem{example}[theorem]{Example}
\newcommand{\fifi}{\mathbf{[11]}}
\newcommand{\fiin}{\mathbf{[1\infty]}}
\newcommand{\inin}{\mathbf{[\infty\infty]}}
\newcommand{\infi}{\mathbf{[\infty 1]}}
\numberwithin{equation}{section}
\newcommand{\smalltwobytwo}[4]{\left[\begin{smallmatrix}#1&#2\\#3&#4\end{smallmatrix}\right]}
\newcommand{\smalltwobyone}[2]{\left[\begin{smallmatrix}#1\\#2\end{smallmatrix}\right]}
\newcommand{\CCap}{\mathcal{C}_{API}}
\newcommand{\cstar}{\mbox{$C^*$}} \newcommand{\KKK}{\mathbb{K}}
\newcommand{\Z}{\mathbb{Z}} \newcommand{\ZZ}{\mathbb{Z}_{++}}
\newcommand{\ZZp}{\Z_{\pm}} \newcommand{\N}{\mathbb{N}}
 \newcommand{\C}{\mathbb{C}}
\newcommand{\T}{\mathbb{T}} \newcommand{\im}{\operatorname{im }}
\newcommand{\coker}{\operatorname{coker }}
 \newcommand{\ext}{\mathfrak{e}}
\newcommand{\Ksix}{K_\textbf{six}}
\newcommand{\gae}{\lower 2pt \hbox{$\, \buildrel {\scriptstyle >}\over
{\scriptstyle \sim}\,$}}
\newcommand{\lae}{\lower 2pt \hbox{$\, \buildrel {\scriptstyle <}\over
{\scriptstyle \sim}\,$}}
\newcommand{\cbstart}{} 
\newcommand{\cbend}{} 
\begin{document} \title{On the classification of nonsimple graph
$C^*$-algebras}

\author{S\o ren Eilers}

\author{Mark Tomforde} 

\address{Department of Mathematical Sciences\\University of
Copenhagen\\Universitetsparken 5\\2100 Copenhagen \O \\Denmark}
\email{eilers@math.ku.dk}

\address{Department of Mathematics \\ University of Houston \\
Houston, TX 77204-3008 \\USA} \email{tomforde@math.uh.edu}


\date{\today}

\subjclass[2000]{46L55}

\keywords{graph $C^*$-algebras, classification, extensions,
$K$-theory}

\begin{abstract} We prove that a graph $C^*$-algebra with exactly one
proper nontrivial ideal is classified up to stable isomorphism by its
associated six-term exact sequence in $K$-theory.  We prove that a
similar classification also holds for a graph $C^*$-algebra with a
largest proper ideal that is an AF-algebra. Our results are based on a
general method developed by the first named author with Restorff and
Ruiz. As a key step in the argument, we show how to produce stability
for certain full hereditary subalgebras associated to such graph
$C^*$-algebras.  We further prove that, except under trivial
circumstances, a unique proper nontrivial ideal in a graph
$C^*$-algebra is stable.
\end{abstract}

\maketitle

\section{Introduction}

The classification program for $C^*$-algebras has for the most part
progressed independently for the classes of infinite and finite
$C^*$-algebras.  Great strides have been made in this program for each
of these classes.  In the finite case, Elliott's Theorem classifies
all AF-algebras up to stable isomorphism by the ordered $K_0$-group.
In the infinite case, there are a number of results for purely
infinite $C^*$-algebras.  The Kirchberg-Phillips Theorem classifies
certain simple purely infinite $C^*$-algebras up to stable isomorphism
by the $K_0$-group together with the $K_1$-group.  For nonsimple
purely infinite $C^*$-algebras many partial results have been
obtained: R\o rdam has shown that certain purely infinite
$C^*$-algebras containing exactly one proper nontrivial ideal are
classified up to stable isomorphism by the associated six-term exact
sequence of $K$-groups \cite{Ror:ceccsteskt}, Restorff has shown that
nonsimple Cuntz-Krieger algebras satisfying Condition (II) are
classified up to stable isomorphism by their filtrated $K$-theory
\cite[Theorem~4.2]{gr:cckasi}, and Meyer and Nest have shown that
certain purely infinite $C^*$-algebras with a linear ideal lattice are
classified up to stable isomorphism by their filtrated $K$-theory
\cite[Theorem~4.14]{mn:cotpfkt}.  However, in all of these situations
the nonsimple $C^*$-algebras that are classified have the property
that they are either AF-algebras or purely infinite, and consequently
all of their ideals and quotients are of the same type.

Recently, the first named author with Restorff and Ruiz have provided
a framework for classifying nonsimple $C^*$-algebras that are not
necessarily AF-algebras or purely infinite $C^*$-algebras.  In
particular, these authors have shown in \cite{segrer:cecc} that
certain extensions of classifiable $C^*$-algebras may be classified up
to stable isomorphism by their associated six-term exact sequence in
\cbstart{}%
$K$-theory.  This has allowed for the classification of certain
\cbend{}%
nonsimple $C^*$-algebras in which there are ideals and quotients of
mixed type (some finite and some infinite).

In this paper we consider the classification of nonsimple graph
$C^*$-algebras.  Simple graph $C^*$-algebras are known to be either
AF-algebras or purely infinite algebras, and thus are classified by
their $K$-groups according to either Elliott's Theorem or the
Kirchberg-Phillips Theorem.  Therefore, we begin by considering
nonsimple graph $C^*$-algebras with exactly one proper nontrivial
ideal.  These $C^*$-algebras will be extensions of simple
$C^*$-algebras that are AF or purely infinite by other simple
$C^*$-algebras that are AF or purely infinite --- with mixing of the
types allowed.  These nonsimple graph $C^*$-algebras are similar to
the extensions considered in \cite{segrer:cecc}, however, the results
of \cite{segrer:cecc} do not apply directly.  Instead, we must do a
fair bit of work, using the techniques from the theory of graph
$C^*$-algebras, to show that the machinery of \cite{segrer:cecc} can
\cbstart{}%
be used to classify these extensions; it is verifying the requirement of
\textbf{fullness} that is most difficult in this
context.  Ultimately, however, we are able to show that a graph
$C^*$-algebra with exactly one proper nontrivial ideal is classified
up to stable isomorphism by the six-term exact sequence in $K$-theory
of the corresponding extension. 
\cbend{}%
Additionally, we are able to show
that a graph $C^*$-algebra with a largest proper ideal that is an
AF-algebra is also classified up to stable isomorphism by the six-term
exact sequence in $K$-theory of the corresponding extension.

It is also worthwhile to note that the extensions of graph
$C^*$-algebras classified in this paper constitute a very large class.
Every AF-algebra is stably isomorphic to a graph $C^*$-algebra, and
every Kirchberg algebra with free $K_1$-group is stably isomorphic to
a a graph $C^*$-algebra.  Thus the extensions we consider comprise a
wide variety of extensions of AF-algebras (respectively, purely
infinite algebras) by purely infinite algebras (respectively,
AF-algebras).

While there is little hope to generalize the methods in
\cite{segrer:cecc} to general (even finite) ideal lattices in a
context covering all graph $C^*$-algebras, the classifications we
obtain in this paper suggest that a complete classification of graph
$C^*$-algebras generalizing the Cuntz-Krieger case solved in
\cite{gr:cckasi} may be possible by other methods.  Such a result may
involve generalizing the work of Boyle and Huang to graph
$C^*$-algebras and mimicking the approach used by Restorff; or perhaps
it may be accomplished by generalizing Kirchberg's isomorphism theorem
to allow for subquotients which are $AF$-algebras, and then in this
special case (probably using the global vanishing of one connecting
map of $K$-theory) overcoming the difficulties of projective dimension
of the invariants exposed by Meyer and Nest. Neither of these
approaches seem within immediate reach, but both appear plausible to
be successful at some future stage.  It is also an open problem, and
possibly much less difficult, to establish isomorphism directly
between unital graph $C^*$-algebras by keeping track of the class of
\cbstart{}%
the unit in the $K_0$-groups.  We mention that the methods of
\cbend{}%
\cite{grer:rccconiII} do not seem to generalize to this setting.

This paper is organized as follows.  In \S\ref{not-conv-sec} we
establish notation and conventions for graph $C^*$-algebras and
extensions.  In \S\ref{prelims-sec} we derive a number of preliminary
results for graph $C^*$-algebras with the goal of applying the methods
of \cite{segrer:cecc}.  In \S\ref{class-sec} we use our results from
\S\ref{prelims-sec} and the results of \cite{segrer:cecc} to prove our
two main theorems: In Theorem~\ref{clas} we show that if $A$ is a
graph $C^*$-algebra with exactly one proper nontrivial ideal $I$, then
$A$ is classified up to stable isomorphism by the six-term exact
sequence in $K$-theory coming from the extension $0 \to I \to A \to
A/I \to 0$.  In Theorem~\ref{clasimp} we show that if $A$ is a graph
\cbstart{}%
$C^*$-algebra with a largest proper ideal $I$ that is an AF-algebra,
then $A$ is classified up to stable isomorphism by the six-term exact
\cbend{}%
sequence in $K$-theory coming from the extension $0 \to I \to A \to
A/I \to 0$.  In \S\ref{ex-sec} we consider a variety of examples, and
also use our results to classify the stable isomorphism classes of the
$C^*$-algebras of all graphs having exactly two vertices and
satisfying Condition~(K).  (Be aware that although these graphs have
only two vertices, the graphs are allowed to contain a finite or
countably infinite number of edges.)  We find that even for this small
collection of graphs, the associated $C^*$-algebras fall into a
variety of stable isomorphism classes, and there are quite a few cases
to consider.  We conclude in \S\ref{stab-ideals-sec} by proving that
if $A$ is a graph $C^*$-algebra that is not a nonunital AF-algebra,
and if $A$ contains a unique proper nontrivial ideal $I$, then $I$ is
stable.

\section{Notation and conventions} \label{not-conv-sec}

We establish some basic facts and notation for graph $C^*$-algebras
and extensions.

\subsection{Notation and conventions for graph $C^*$-algebra} A
(directed) graph $E=(E^0, E^1, r, s)$ consists of a countable set
$E^0$ of vertices, a countable set $E^1$ of edges, and maps $r,s: E^1
\rightarrow E^0$ identifying the range and source of each edge.  A
vertex $v \in E^0$ is called a \emph{sink} if $|s^{-1}(v)|=0$, and $v$
is called an \emph{infinite emitter} if $|s^{-1}(v)|=\infty$. A graph
$E$ is said to be \emph{row-finite} if it has no infinite emitters. If
$v$ is either a sink or an infinite emitter, then we call $v$ a
\emph{singular vertex}.  We write $E^0_\textnormal{sing}$ for the set
of singular vertices.  Vertices that are not singular vertices are
called \emph{regular vertices} and we write $E^0_\textnormal{reg}$ for
the set of regular vertices.  For any graph $E$, the \emph{vertex
matrix} is the $E^0 \times E^0$ matrix $A_E$ with $A_e(v,w) := | \{ e
\in E^1 : s(e)= v \text{ and } r(e)=w \} |$.  Note that the entries of
$A_E$ are elements of $\{0, 1, 2, \ldots \} \cup \{ \infty \}$.

If $E$ is a graph, a \emph{Cuntz-Krieger $E$-family} is a set of
mutually orthogonal projections $\{p_v : v \in E^0\}$ and a set of
partial isometries $\{s_e : e \in E^1\}$ with orthogonal ranges which
satisfy the \emph{Cuntz-Krieger relations}:
\begin{enumerate}
\item $s_e^* s_e = p_{r(e)}$ for every $e \in E^1$;
\item $s_e s_e^* \leq p_{s(e)}$ for every $e \in E^1$;
\item $p_v = \sum_{s(e)=v} s_e s_e^*$ for every $v \in E^0$ that is
not a singular vertex.
\end{enumerate} The \emph{graph algebra $C^*(E)$} is defined to be the
$C^*$-algebra generated by a universal Cuntz-Krieger $E$-family.

A \emph{path} in $E$ is a sequence of edges $\alpha = \alpha_1
\alpha_2 \ldots \alpha_n$ with $r(\alpha_i) = s(\alpha_{i+1})$ for $1
\leq i < n$, and we say that $\alpha$ has length $|\alpha| = n$.  We
let $E^n$ denote the set of all paths of length $n$, and we let $E^*
:= \bigcup_{n=0}^\infty E^n$ denote the set of finite paths in $G$.
Note that vertices are considered paths of length zero.  The maps
$r,s$ extend to $E^*$, and for $v,w \in E^0$ we write $v \geq w$ if
there exists a path $\alpha \in E^*$ with $s(\alpha)=v$ and $r(\alpha)
= w$.  Also for a path $\alpha := \alpha_1 \ldots \alpha_n$ we define
$s_\alpha := s_{\alpha_1} \ldots s_{\alpha_n}$, and for a vertex $v
\in E^0$ we let $s_v := p_v$.  It is a consequence of the
Cuntz-Krieger relations that $C^*(E) = \overline{\textrm{span}} \{
s_\alpha s_\beta^* : \alpha, \beta \in E^* \text{ and } r(\alpha) =
r(\beta)\}$.

We say that a path $\alpha := \alpha_1 \ldots \alpha_n$ of length $1$
or greater is a \emph{cycle} if $r(\alpha)=s(\alpha)$, and we call the
vertex $s(\alpha)=r(\alpha)$ the \emph{base point} of the cycle.  A
cycle is said to be \emph{simple} if $s(\alpha_i) \neq s(\alpha_1)$
for all $1 < i \leq n$.  The following is an important condition in
the theory of graph $C^*$-algebras.

$\text{ }$

\noindent \textbf{Condition~(K)}: No vertex in $E$ is the base point
of exactly one simple cycle; that is, every vertex is either the base
point of no cycles or at least two simple cycles.

$\text{ }$

For any graph $E$ a subset $H \subseteq E^0$ is \emph{hereditary} if
whenever $v, w \in E^0$ with $v \in H$ and $v \geq w$, then $w \in H$.
A hereditary subset $H$ is \emph{saturated} if whenever $v \in
E^0_\textnormal{reg}$ with $r(s^{-1}(v)) \subseteq H$, then $v \in H$.
For any saturated hereditary subset $H$, the \emph{breaking vertices}
corresponding to $H$ are the elements of the set $$B_H := \{ v \in E^0
: |s^{-1}(v)| = \infty \text{ and } 0 < |s^{-1}(v) \cap r^{-1}(E^0
\setminus H)| < \infty \}.$$ An \emph{admissible pair} $(H, S)$
consists of a saturated hereditary subset $H$ and a subset $S
\subseteq B_H$.  For a fixed graph $E$ we order the collection of
admissible pairs for $E$ by defining $(H,S) \leq (H',S')$ if and only
if $H \subseteq H'$ and $S \subseteq H' \cup S'$. For any admissible
pair $(H,S)$ we define
$$
I_{(H,S)} := \text{the ideal in $C^*(E)$ generated by $\{p_v : v \in
H\} \cup \{p_{v_0}^H : v_0 \in S\}$},
$$
where $p_{v_0}^H$ is the \emph{gap projection} defined by
$$
p_{v_0}^H := p_{v_0} - \sum_{{s(e) = v_0} \atop {r(e) \notin H}} s_e
s_e^*.
$$
Note that the definition of $B_H$ ensures that the sum on the right is
finite.

For any graph $E$ there is a canonical gauge action $\gamma :
\mathbb{T} \to \operatorname{Aut} C^*(E)$ with the property that for
any $z \in \mathbb{T}$ we have $\gamma_z (p_v) = p_v$ for all $v \in
E^0$ and $\gamma_z(s_e) = zs_e$ for all $e \in E^1$.  We say that an
ideal $I \triangleleft C^*(E)$ is \emph{gauge invariant} if
$\gamma_z(I) \subseteq I$ for all $z \in \mathbb{T}$.

There is a bijective correspondence between the lattice of admissible
pairs of $E$ and the lattice of gauge-invariant ideals of $C^*(E)$
given by $(H, S) \mapsto I_{(H,S)}$
\cite[Theorem~3.6]{bhrs:iccig}. When $E$ satisfies Condition~(K), all
ideals of $C^*(E)$ are gauge invariant \cite[Theorem~2.24]{mt:sgcg}
and the map $(H, S) \mapsto I_{(H,S)}$ is onto the lattice of ideals
of $C^*(E)$.  When $B_H = \emptyset$, we write $I_H$ in place of
\cbstart{}%
$I_{(H, \emptyset)}$ and observe that $I_H$ equals the ideal generated
\cbend{}%
by $\{ p_v : v \in H \}$.  Note that if $E$ is row-finite, then $B_H$
is empty for every saturated hereditary subset $H$.

\subsection{Notation and conventions for extensions} All ideals in
$C^*$-algebras will be considered to be closed two-sided ideals.  An
element $a$ of a $C^*$-algebra $A$ (respectively, a subset $S
\subseteq A$) is said to be \emph{full} if $a$ (respectively, $S$) is
not contained in any proper ideal of $A$.  A map $\phi : A \to B$ is
\emph{full} if $\im \phi$ is full in $B$.

If $A$ and $B$ are $C^*$-algebras, an \emph{extension} of $A$ by $B$
consists of a $C^*$-algebra $E$ and a short exact sequence
\[ \xymatrix{\ext: & 0 \ar[r] & B \ar[r]^\alpha & E \ar[r]^\beta & A
\ar[r] & 0.}
\] We say that the extension $\ext$ is \emph{essential} if
$\alpha(B)$ is an essential ideal of $E$, and we say that the extension $\ext$ is \emph{unital} if $E$ is a unital $C^*$-algebra.  For any extension there
exist unique $*$-homomorphisms $\eta_\ext : E \to \mathcal{M} (B)$ and
$\tau_\ext : A \to \mathcal{Q}(B) := \mathcal{M}(B)/B$ which make the
diagram
\[ \xymatrix{ 0 \ar[r] & B \ar[r]^\alpha \ar@{=}[d] & E \ar[r]^\beta
\ar[d]_{\eta_\ext} & A \ar[r] \ar[d]_{\tau_\ext} & 0 \\ 0 \ar[r] & B
\ar[r]^i & \mathcal{M}(B) \ar[r]^\pi & \mathcal{Q}(B) \ar[r] & 0}
\] commute.  The $*$-homomorphism $\tau_\ext$ is called the
\emph{Busby invariant} of the extension, and the extension is
essential if and only if $\tau_\ext$ is injective.  An extension
$\ext$ is \emph{full} if the associated Busby invariant $\tau_\ext$
has the property that $\tau_\ext(a)$ is full in $\mathcal{Q}(A)$ for
every $a \in A \setminus \{ 0 \}$.

For an extension $\ext$, we let $\Ksix (\ext)$ denote the cyclic
six-term exact sequence of $K$-groups
\[ \xymatrix{ {K_0(B)}\ar[r]&{K_0(E)}\ar[r]&{K_0(A)}\ar[d]\\
{K_1(A)}\ar[u]&{K_1(E)}\ar[l]&{K_1(B)}\ar[l]}
\] where $K_0(B)$, $K_0(E)$, and $K_0(A)$ are viewed as (pre-)ordered
groups.  Given two extensions
\cbstart{}%
\[ \xymatrix{\ext_1: & 0 \ar[r] & B_1 \ar[r]^{\alpha_1} & E_1 \ar[r]^{\beta_1}
& A_1 \ar[r] & 0 & \\ \ext_2: & 0 \ar[r] & B_2 \ar[r]^{\alpha_2} & E_2
\ar[r]^{\beta_2} & A_2 \ar[r] & 0 & }
\] we say $\Ksix(\ext_1)$ \emph{is isomorphic to} $\Ksix(\ext_2)$,
\cbend{}%
written $\Ksix(\ext_1) \cong \Ksix(\ext_2)$, if there exist
isomorphisms $\alpha$, $\beta$, $\gamma$, $\delta$, $\epsilon$, and
$\zeta$ making the following diagram commute
\[ \xymatrix{ K_0(B_1) \ar[rr] \ar[rd]_\alpha & & K_0(E_1) \ar[rr]
\ar[d]_\beta & & K_0(A_1) \ar[ddd] \ar[dl]^\gamma \\ &
{K_0(B_2)}\ar[r]&{K_0(E_2)}\ar[r]&{K_0(A_2)}\ar[d] & \\ &
{K_1(A_2)}\ar[u]&{K_1(E_2)}\ar[l]&{K_1(B_2)}\ar[l] & \\ K_1(A_1)
\ar[uuu] \ar[ru]^\zeta & & K_1(E_1) \ar[ll] \ar[u]^\epsilon & &
K_1(B_1) \ar[ll] \ar[ul]_\delta}
\] and where $\alpha$, $\beta$, and $\gamma$ are isomorphisms of
(pre-)ordered groups.

\section{Preliminary graph $C^*$-algebra results} \label{prelims-sec}

In this section we develop a few results for graph $C^*$-algebras in
order to apply the methods of \cite{segrer:cecc} in \S
\ref{class-sec}.  However, several of these results are interesting in
their own right.

\begin{lemma} \label{E-structure-lem} If $E$ is a graph such that
$C^*(E)$ contains a unique proper nontrivial ideal $I$, then the
following six conditions are satisfied:
\begin{enumerate}
\item $E$ satisfies Condition (K),
\item $E$ contains exactly three saturated hereditary subsets $\{
\emptyset, H, E^0 \}$,
\item $E$ contains no breaking vertices; i.e., $B_H = \emptyset$,
\item $I$ is a gauge invariant ideal and $I_H = I$,
\item If $X$ is a nonempty hereditary subset of $E$, then $X \cap H
\neq \emptyset$, and
\item $E$ has at most one sink, and if $v$ is a sink of $E$ then $v
\in H$.
\end{enumerate}
\end{lemma}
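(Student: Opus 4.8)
The plan is to prove (1) first, because Condition~(K) is exactly what upgrades the gauge-invariant ideal correspondence \cite[Theorem~3.6]{bhrs:iccig} into a description of \emph{all} ideals via \cite[Theorem~2.24]{mt:sgcg}; the remaining five conditions then fall out of this correspondence together with two elementary saturation arguments. To establish (1) I would argue by contraposition: if $E$ fails Condition~(K), then some vertex is the base point of exactly one simple cycle, and the structure theory of ideals in graph $C^*$-algebras produces from such a cycle a subquotient of $C^*(E)$ of the form $M_k(C(\mathbb{T}))$ (equivalently $C(\mathbb{T})\otimes\mathbb{K}$). Since the ideals of $C(\mathbb{T})$ are in bijection with the closed subsets of $\mathbb{T}$, such a subquotient, and hence $C^*(E)$ itself, would possess uncountably many ideals, contradicting the hypothesis that the ideal lattice is the three-element chain $\{0,I,C^*(E)\}$. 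I expect this to be the main obstacle, since it is the only step that leaves the combinatorics of admissible pairs and relies on the nontrivial dichotomy that failure of Condition~(K) forces uncountably many ideals.

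Granting (1), every ideal of $C^*(E)$ is gauge invariant, so $(H,S)\mapsto I_{(H,S)}$ is a lattice isomorphism from the admissible pairs of $E$ onto the three-element ideal lattice. The trivial ideals are accounted for by $(\emptyset,\emptyset)\mapsto 0$ and $(E^0,\emptyset)\mapsto C^*(E)$, and a direct computation from the definition of $B_H$ gives $B_\emptyset=B_{E^0}=\emptyset$, so these are the only admissible pairs with first coordinate $\emptyset$ or $E^0$. The third pair must correspond to $I$; writing it $(H,S)$, its distinctness from the two trivial pairs forces $H\notin\{\emptyset,E^0\}$, producing a third saturated hereditary subset and, since a fourth would yield a fourth ideal, exactly the three subsets of (2). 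Because distinct $S'\subseteq B_H$ give distinct admissible pairs $(H,S')$ and hence distinct proper nontrivial ideals, only one of which can occur among our three ideals, we get $B_H=\emptyset$, which is (3); then $S\subseteq B_H=\emptyset$ forces $S=\emptyset$, so $I=I_{(H,\emptyset)}=I_H$ and $I$ is gauge invariant by (1), giving (4). Note along the way that $\emptyset\subsetneq H\subsetneq E^0$, since $I$ is proper and nontrivial.

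For (5) I would suppose toward a contradiction that $X$ is a nonempty hereditary set with $X\cap H=\emptyset$. Its saturation $\overline X$ is a nonempty saturated hereditary set, hence equals $H$ or $E^0$ by (2); it cannot be $H$, as that forces $X\subseteq H$, so $\overline X=E^0$. Writing $\overline X=\bigcup_n X_n$ with $X_0=X$ and $X_{n+1}=X_n\cup\{u\in E^0_{\mathrm{reg}}:r(s^{-1}(u))\subseteq X_n\}$, let $n$ be least with $X_n\cap H\neq\emptyset$; such an $n\geq 1$ exists since $X_0\subseteq E^0\setminus H$ while $E^0=\overline X\supseteq H\neq\emptyset$. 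Any $v\in(X_n\setminus X_{n-1})\cap H$ is then regular with $r(s^{-1}(v))\subseteq X_{n-1}\subseteq E^0\setminus H$; but $v\in H$ with $H$ hereditary forces $r(s^{-1}(v))\subseteq H$, and since $v$ is regular it emits an edge whose range lies in $H\cap(E^0\setminus H)$, a contradiction.

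Finally I would derive (6) from (5) and the same saturation idea. If $v$ is a sink then $\{v\}$ is hereditary, so $\{v\}\cap H\neq\emptyset$ by (5), i.e. $v\in H$, which is the second assertion. For uniqueness, suppose $v\neq w$ were two sinks, so $v,w\in H$. The saturated hereditary closure $\overline{\{v\}}$ is nonempty and contained in $H$, hence equals $H$ by (2), so $w\in H=\overline{\{v\}}$. But the saturation process only ever adjoins regular vertices, whereas a sink $w$ is singular; thus $w\in\overline{\{v\}}$ forces $w\in\{v\}$, i.e. $w=v$, a contradiction. Hence $E$ has at most one sink, completing the proof.
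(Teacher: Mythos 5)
Your proposal is correct and takes essentially the same route as the paper: Condition~(K) first via the observation that its failure yields a subquotient with ideal lattice that of $M_k(C(\mathbb{T}))$ (the paper makes precise the step you compress into ``structure theory'' by citing a result giving a saturated hereditary $H$ so that $E\setminus H$ contains a cycle \emph{with no exits}), then (2)--(4) from the admissible-pair parametrization of ideals under Condition~(K), and (5)--(6) by saturation arguments. Your only departures are cosmetic: in (5) you run the saturation inductively by levels where the paper notes that every vertex of $\overline{X}$ can reach $X$ while no vertex of $H$ can, and in (6) you observe that saturation adjoins only regular vertices (so a second sink cannot enter $\overline{\{v\}}$) where the paper instead uses that neither sink can reach the other, producing two distinct proper nontrivial saturated hereditary sets.
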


\begin{proof} Suppose that $E$ does not satisfy Condition~(K).  Then
by \cite[Proposition~1.17]{mt:sgcg} there exists a saturated
hereditary subset $H \subseteq E^0$ such that $E \setminus H$ contains
a cycle $\alpha = e_1 \ldots e_n$ with no exits.  The set $X = \{
s(e_i) \}_{i=1}^n$ is a hereditary subset of $E \setminus H$, and
$I_X$ is an ideal in $C^*(E \setminus H)$ Morita equivalent to $M_n
(C(\mathbb{T}))$ (see \cite[Proposition~3.4]{bhrs:iccig} and
\cite[Example~2.14]{rae:ga} for details).  Thus $I_X$, and hence
$C^*(E\setminus H)$, contains a countably infinite number of ideals.
Since $C^*(E \setminus H) \cong C^*(E) / I_{(H,B_H)}$
\cite[Proposition~3.4]{bhrs:iccig}, this implies that $C^*(E)$ has a
countably infinite number of ideals.  Hence if $C^*(E)$ has a finite
number of ideals, $E$ satisfies Condition~(K).

Because $E$ satisfies Condition~(K), it follows from
\cite[Theorem~3.5]{ddmt:ckegc} that the ideals of $C^*(E)$ are in
one-to-one correspondence with the pairs $(H, S)$ where $H$ is
saturated hereditary, and $S \subseteq B_H$ is a subset of the
breaking vertices of $H$.  Since $E$ contains a unique proper
nontrivial ideal, it follows that $E$ contains a unique saturated
hereditary subset $H$ not equal to $E^0$ or $\emptyset$, and that
there are no breaking vertices; i.e., $B_H = \emptyset$.  It must also
be the case that $I= I_H$.  Moreover, since $E$ satisfies
Condition~(K), \cite[Corollary~3.8]{bhrs:iccig} shows that all ideals
of $C^*(E)$ are gauge-invariant.

In addition, suppose $X$ is a hereditary subset with $X \cap H =
\emptyset$.  Since $H$ is hereditary, none of the vertices in $H$ can
reach $X$, and thus the saturation $\overline{X}$ contains no vertices
of $H$, and $\overline{X} \cap H = \emptyset$.  But then
$\overline{X}$ is a saturated hereditary subset of $E$ that does not
contain the vertices of $H$, and hence must be equal to $\emptyset$.
Thus if $X$ is a nonempty hereditary subset of $E$, then $X \cap H
\neq \emptyset$.

Finally, suppose $v$ is a sink of $E$.  Consider the hereditary subset
$X := \{ v \}$.  From the previous paragraph it follows that $X \cap H
\neq \emptyset$ and hence $v \in H$.  In addition, there cannot be a
second sink in $E$, for if $v'$ is a sink, then $X := \{ v \}$ and $Y
:= \{ v' \}$ are distinct hereditary sets.  Since $v$ cannot reach
$v'$, we see that $v$ is not in the saturation $\overline{Y}$.
Similarly, since $v'$ cannot reach $v$, we have that $v'$ is not in
the saturation $\overline{X}$.  Thus $\overline{X}$ and $\overline{Y}$
are distinct saturated hereditary subsets of $E$ that are proper and
nontrivial, which is a contradiction.  It follows that there is at
most one sink in $E$.
\end{proof}

\begin{remark} \label{cases-remark} According to
\cite[Lemma~1.3]{kdjhhsw:srga} and \cite[Corollary~3.5]{bhrs:iccig}
the $C^*$-algebras $I$ and $A/I$ are in this case also graph
algebras. As they are necessarily simple, they must be either
Kirchberg algebras or AF-algebras.  We will denote the four cases thus
occurring as follows

$ $

\cbstart{}%
\begin{center}
\begin{tabular}{|c|c|c|}\hline Case&$I$&$A/I$\\ \hline $\fifi$&AF&AF\\
$\fiin$&AF&Kirchberg\\ $\infi$&Kirchberg&AF\\
$\inin$&Kirchberg&Kirchberg\\ \hline
\end{tabular}
\end{center} \smallskip
\cbend{}%
\end{remark}

$ $

\begin{definition} Let $A$ be a $C^*$-algebra.  A proper ideal $I
\triangleleft A$ is a \emph{largest proper ideal} of $A$ if whenever
$J \triangleleft A$, then either $J \subseteq I$ or $J=A$.
\end{definition}

Observe that a largest proper ideal is always an essential ideal.
Also note that if $A$ is a $C^*$-algebra with a unique proper
nontrivial ideal $I$, then $I$ is a largest proper ideal; and if $A$
is a simple $C^*$-algebra then $\{ 0 \}$ is a largest proper ideal.

\begin{lemma} \label{largest-g-i} Let $E$ be a graph, and suppose that
$I$ is a largest proper ideal of $C^*(E)$.  Then $I$ is gauge
invariant and $I = I_{(H, B_H)}$ for some saturated hereditary subset
$H$ of $E^0$.  Furthermore, if $K$ is any saturated hereditary subset
of $E$, then either $K \subseteq H$ or $K = E^0$.
\end{lemma}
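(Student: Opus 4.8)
The plan is to derive every assertion from the single observation that a largest proper ideal, when it exists, is unique. Indeed, if $I$ and $I'$ were both largest proper ideals of $C^*(E)$, then applying the defining property of $I$ to the proper ideal $I'$ gives $I' \subseteq I$, and symmetrically $I \subseteq I'$, whence $I = I'$. I would record this uniqueness first, since it is what lets me establish gauge invariance without assuming Condition~(K).

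Next I would prove that $I$ is gauge invariant. For each $z \in \mathbb{T}$ the gauge automorphism $\gamma_z$ carries closed two-sided ideals to closed two-sided ideals and, being a bijection, preserves and reflects inclusions; in particular $\gamma_z(I)$ is again proper. Moreover, for an arbitrary ideal $J \triangleleft C^*(E)$, applying the defining property of $I$ to $\gamma_z^{-1}(J) = \gamma_{\bar z}(J)$ shows that either $\gamma_z^{-1}(J) \subseteq I$ or $\gamma_z^{-1}(J) = C^*(E)$; pushing forward by $\gamma_z$ yields $J \subseteq \gamma_z(I)$ or $J = C^*(E)$. Thus $\gamma_z(I)$ is itself a largest proper ideal, and uniqueness forces $\gamma_z(I) = I$ for every $z \in \mathbb{T}$, so $I$ is gauge invariant.

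Now I would invoke the lattice isomorphism $(H,S) \mapsto I_{(H,S)}$ between admissible pairs and gauge-invariant ideals from \cite[Theorem~3.6]{bhrs:iccig} to write $I = I_{(H,S)}$ for some admissible pair $(H,S)$. Because $I$ is proper we must have $H \neq E^0$ (otherwise $S \subseteq B_{E^0} = \emptyset$ would force $I = I_{(E^0,\emptyset)} = C^*(E)$), and hence $I_{(H,B_H)} \neq C^*(E)$ by injectivity of the correspondence. Since $S \subseteq B_H$ gives $(H,S) \le (H,B_H)$, we get $I = I_{(H,S)} \subseteq I_{(H,B_H)}$; and as $I$ is the largest proper ideal while $I_{(H,B_H)}$ is proper, the reverse inclusion holds as well, so $I = I_{(H,B_H)}$.

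For the final order statement, let $K$ be any saturated hereditary subset of $E^0$. Then $I_K = I_{(K,\emptyset)}$ is a gauge-invariant ideal, so by the largest-proper-ideal property either $I_K = C^*(E)$, in which case the correspondence gives $(K,\emptyset) = (E^0,\emptyset)$ and thus $K = E^0$, or $I_K \subseteq I = I_{(H,B_H)}$, in which case $(K,\emptyset) \le (H,B_H)$ and hence $K \subseteq H$. The one step needing genuine care is the gauge-invariance argument: it rests on combining uniqueness of the largest proper ideal with the fact that each $\gamma_z$ is an \emph{automorphism}. It is worth emphasizing that Condition~(K) is never used, so the bijective correspondence of \cite{bhrs:iccig} is applied only in its general, gauge-invariant form; this is exactly what makes the lemma available for arbitrary graphs $E$.
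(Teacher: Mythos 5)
Your proposal is correct and follows essentially the same route as the paper: gauge invariance of $I$ via the fact that each $\gamma_z$ is an automorphism combined with the largest-proper-ideal property, followed by the admissible-pair correspondence of \cite[Theorem~3.6]{bhrs:iccig} to identify $I = I_{(H,B_H)}$ and to deduce the order statement on saturated hereditary subsets. Your detour through uniqueness of the largest proper ideal (where the paper argues directly that $\gamma_z(I) \subseteq I$ and $\gamma_{z^{-1}}(I) \subseteq I$), and your use of the pair $(K,\emptyset)$ where the paper uses $(K,B_K)$, are only cosmetic variations; if anything, you spell out the properness of $I_{(H,B_H)}$ more explicitly than the paper does.
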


\begin{proof} Let $\gamma$ denote the canonical gauge action of $\T$
on $C^*(E)$.  For any $z \in \T$ we have that $\gamma_z(I)$ is a
proper ideal of $C^*(E)$.  Since $I$ is a largest proper ideal of
$C^*(E)$, it follows that $\gamma_z(I) \subseteq I$.  A similar
argument shows that $\gamma_{z^{-1}}(I) \subseteq I$.  Thus
$\gamma_z(I) = I$ and $I$ is gauge invariant.  It follows from
\cite[Theorem~3.6]{bhrs:iccig} that $I = I_{(H,S)}$ for some saturated
hereditary subset $H$ of $E^0$ and some subset $S \subseteq B_H$.
Because $I$ is a largest proper ideal, it follows that $S = B_H$, and
hence $I = I_{(H,B_H)}$.  Furthermore, if $K$ is a saturated
hereditary subset, then either $I_{(K,B_K)} \subseteq I_{(H,B_H)}$ or
$I_{(K,B_K)} = C^*(E)$.  Hence either $K \subseteq H$ or $K = E^0$.
\end{proof}

\begin{lemma} \label{cycle-exists-lem} Let $E$ be a graph and suppose
that $I$ is a largest proper ideal of $C^*(E)$ with the property that
$C^*(E) / I$ is purely infinite.  Then $I = I_{(H, B_H)}$ for some
saturated hereditary subset $H$ of $E^0$, and there exists a cycle
$\gamma$ in $E \setminus H$ and an edge $f \in E^1$ with $s(f) =
s(\gamma)$ and $r(f) \in H$.  Furthermore, if $x \in E^0$, then $x
\geq s(\gamma)$ if and only if $x \in E^0 \setminus H$.
\end{lemma}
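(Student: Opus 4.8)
The plan is to first pin down the ideal structure and then locate a suitable cycle. By Lemma~\ref{largest-g-i}, since $I$ is a largest proper ideal it is gauge invariant and $I = I_{(H,B_H)}$ for some saturated hereditary $H \subseteq E^0$; moreover every saturated hereditary subset $K$ satisfies $K \subseteq H$ or $K = E^0$. Because the desired conclusion produces an edge with range in $H$, we are in the case $H \neq \emptyset$ (equivalently $I \neq \{0\}$). Since $I$ is a largest proper ideal, $C^*(E)/I$ is simple, and by \cite[Proposition~3.4]{bhrs:iccig} it is isomorphic to $C^*(E \setminus H)$. Thus $C^*(E \setminus H)$ is simple and, by hypothesis, purely infinite. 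Consequently $E \setminus H$ is cofinal and, since a simple graph $C^*$-algebra with no cycles would be an AF-algebra, $E \setminus H$ must contain a cycle.

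Next I would show that every vertex of $E$ can reach $H$. Let $G := \{ w \in E^0 : w \not\geq v \text{ for all } v \in H \}$. One checks directly that $G$ is hereditary, and that its saturation $\overline{G}$ remains disjoint from $H$: a regular vertex all of whose edges have range failing to reach $H$ cannot itself reach $H$, and cannot lie in $H$ since $H$ is hereditary. Hence $\overline{G}$ is a saturated hereditary subset disjoint from the nonempty set $H$; were it nonempty, Lemma~\ref{largest-g-i} would force $\overline{G} = E^0$, contradicting $\overline{G} \cap H = \emptyset$. Thus $G = \emptyset$ and every vertex of $E$ reaches $H$.

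Now I would assemble the cycle together with its exiting edge. Choose any cycle in $E \setminus H$ and let $v_0$ be its base point, so $v_0 \in E^0 \setminus H$. Since $v_0 \geq H$, pick a path from $v_0$ into $H$; because $H$ is hereditary this path stays in $E^0 \setminus H$ until it first enters $H$, so if $u$ denotes the last vertex before entering $H$, then $u \in E^0 \setminus H$, the initial segment from $v_0$ to $u$ lies in $E \setminus H$, and there is an edge $f$ with $s(f) = u$ and $r(f) \in H$. By cofinality of $E \setminus H$, the vertex $u$ reaches the cycle and hence reaches $v_0$ within $E \setminus H$; concatenating a path $u \to v_0$ with the path $v_0 \to u$ (or taking the original cycle when $u = v_0$) produces a cycle $\gamma$ in $E \setminus H$ with $s(\gamma) = u$. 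Then $s(f) = u = s(\gamma)$ and $r(f) \in H$, as required.

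Finally I would verify the last assertion for this $\gamma$. If $x \geq s(\gamma) = u$ and $x \in H$, then hereditariness of $H$ forces $u \in H$, a contradiction; hence $x \geq s(\gamma)$ implies $x \in E^0 \setminus H$. Conversely, if $x \in E^0 \setminus H$, then cofinality of $E \setminus H$ gives $x \geq u = s(\gamma)$. The main obstacle is producing the \emph{based} edge: an arbitrary cycle in $E \setminus H$ need not emit an edge into $H$ from its base point, so the crux is to combine essentiality (in the guise of Lemma~\ref{largest-g-i}, which forces every vertex to reach $H$) with cofinality of $E \setminus H$ in order to relocate the base point to a vertex $u$ that emits an edge into $H$ while still lying on a cycle.
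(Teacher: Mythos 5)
Your proof is correct and follows essentially the same route as the paper's: Lemma~\ref{largest-g-i} produces $H$, the quotient isomorphism $C^*(E)/I \cong C^*(E \setminus H)$ yields a cycle in $E \setminus H$, and the cycle is rebased at the last vertex of a path from the cycle before it enters $H$, which is exactly the paper's construction $\gamma = \nu\mu_1\cdots\mu_{n-1}$, $f = \mu_n$. The only cosmetic difference is that you import the two reachability statements from cofinality of $E \setminus H$ (via simplicity of the quotient, just as the paper itself does in Lemma~\ref{stable-l-i-lem}), whereas the paper derives them directly from the largest-ideal dichotomy applied to the saturated hereditary set $K = \{x \in E^0 : x \ngeq s(\alpha)\}$ and to the saturation of $\{x \in E^0 : s(\alpha) \geq x\}$ --- your set $G$ of vertices that cannot reach $H$ plays the same role as the latter.
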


\begin{proof} Lemma~\ref{largest-g-i} shows that $I = I_{(H, B_H)}$
for some saturated hereditary subset $H$ of $E^0$.  It follows from
\cite[Corollary~3.5]{bhrs:iccig} that $C^*(E) / I_{(H, B_H)} \cong
C^*(E \setminus H)$, where $E \setminus H$ is the subgraph of $E$ with
$(E \setminus H)^0 := E^0 \setminus H$ and $(E \setminus H)^1 := E^1
\setminus r^{-1}(H)$.  Since $C^*(E \setminus H)$ is purely infinite,
it follows from \cite[Corollary~2.14]{ddmt:cag} that $E \setminus H$
contains a cycle $\alpha$.  Define $K := \{ x \in E^0 : x \ngeq
s(\alpha) \}$.  Then $K$ is saturated hereditary, $H \subseteq K$, and
$K \neq E^0$.  Hence $I_{(H, B_H)} \subseteq I_{(K, B_K)} \neq
C^*(E)$, and the fact that $I_{(H, B_H)}$ is a largest proper ideal
implies that $I_{(H, B_H)} = I_{(K, B_K)}$ so that $H = K$.  Hence for
$x \in E^0$ we have $x \geq s(\alpha)$ if and only if $x \in E^0
\setminus H$.

Consider the set $J := \{ x \in E^0 : s(\alpha) \geq x \}$.  Then $J$
is a hereditary subset and we let $\overline{J}$ denote its
saturation.  Since $I_{(H,B_H)}$ is a largest proper ideal, it follows
that either $\overline{J} \subseteq H$ or $\overline{J} = E^0$.  Since
$s(\alpha) \in \overline{J} \setminus H$, we must have $\overline{J} =
E^0$.  Choose any element $w \in H$.  Since $w \in \overline{J}$ it
follows that there exists $v \in J$ with $w \geq v$.  But since $w
\geq v$ and $H$ is hereditary, it follows that $v \in H$.  Hence $v
\in J \cap H$, and there is a path from $s(\alpha)$ to a vertex in
$H$.  Choose a path $\mu = \mu_1 \mu_2 \ldots \mu_n$ with $s(\mu) =
s(\alpha)$, $r(\mu_{n-1}) \notin H$, and $r(\mu_n) \in H$.  Since
$r(\mu_{n-1}) \notin H$ the previous paragraph shows that there exists
a path $\nu$ with $s(\nu) = r(\mu_{n-1})$ and $r(\nu) = s(\alpha)$.
Let $\gamma := \nu \mu_1 \ldots \mu_{n-1}$ and let $f:= \mu_n$.  Then
$\gamma$ is a cycle in $E \setminus H$ and $f$ is an edge with $s(f) =
s(\gamma)$ and $r(f) \in H$.  Furthermore, since $s(\alpha)$ is a
vertex on the cycle $\gamma$, we see that for any $x \in E^0$ we have
$x \geq s(\gamma)$ if and only if $x \geq s(\alpha)$.  It follows from
the previous paragraph that if $x \in E^0$, then $x \geq s(\gamma)$ if
and only if $x \in E^0 \setminus H$.
\end{proof}

\begin{remark} Note that the conclusion of the above lemma does not
hold if $I$ is a maximal proper ideal that is not a largest proper
ideal.  For example, if $E$ is the graph $ $

$$\xymatrix{ v & w \ar[l] \ar[r] & x  \ar@(ul,ur)[]  \ar@(dl,dr)[] }$$ 

$ $

\noindent and $H= \{v\}$ then $I := I_H$ is a maximal ideal that is
AF, and $C^*(E) / I_H \cong M_2 (\mathcal{O}_2)$ is purely infinite.
However, there is no edge from the base point of a cycle to $H = \{ v
\}$.
\end{remark}

\begin{definition} We say that two projections $p,q \in A$ are
\emph{equivalent}, written $p \sim q$, if there exists an element $v
\in A$ with $p=vv^*$ and $q=v^*v$.  We write $p \lesssim q$ to mean
that $p$ is equivalent to a subprojection of $q$; that is, there
exists $v \in A$ such that $p = vv^*$ and $v^*v \leq q$.  Note that $p
\lesssim q$ and $q \lesssim p$ does not imply that $p \sim q$ (unless
$A$ is finite).
\end{definition}

If $e \in G^1$ then we see that $p_{r(e)} = s_e^*s_e$ and $s_es_e^*
\leq p_{s(e)}$.  Therefore $p_{r(e)} \lesssim p_{s(e)}$.  More
generally we see that $v \geq w$ implies $p_w \lesssim p_v$.

\begin{lemma} \label{stab-equiv} Let $A$ be a $C^*$-algebra with an
increasing countable approximate unit $\{ p_n \}_{n=1}^\infty$
consisting of projections.  Then the following are equivalent.
\begin{enumerate}
\item[(i)] $A$ is stable.
\item[(ii)] For every projection $p \in A$ there exists a projection
$q \in A$ such that $p \sim q$ and $p \perp q$.
\item[(iii)] For all $n \in \N$ there exists $m > n$ such that $p_n
\lesssim p_m-p_n$
\end{enumerate}
\end{lemma}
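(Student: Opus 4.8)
**

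The plan is to prove the three conditions equivalent by establishing the cycle of implications (i) $\Rightarrow$ (ii) $\Rightarrow$ (iii) $\Rightarrow$ (i), with the approximate unit of projections doing the heavy lifting in the last step. The implication (i) $\Rightarrow$ (ii) should be the most routine: if $A$ is stable then $A \cong A \otimes \mathbb{K}$, and given any projection $p$ we may regard it as $p \otimes e_{11}$ and set $q := p \otimes e_{22}$, which is manifestly equivalent to $p$ (via $p \otimes e_{21}$) and orthogonal to it since $e_{11} \perp e_{22}$. I would spell this out carefully enough to make the identifications honest but expect no obstacle here.

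For (ii) $\Rightarrow$ (iii), fix $n$ and apply (ii) to the projection $p_n$ to obtain $q \sim p_n$ with $q \perp p_n$. The element witnessing $q \sim p_n$ and the orthogonality give $q \lesssim (1 - p_n)$ in the multiplier algebra sense, so $p_n \sim q \lesssim 1 - p_n$. The point is then to absorb $q$ into the approximate unit: since $\{p_m\}$ is an approximate unit and $q \in A$, for large $m$ the projection $p_m$ very nearly dominates $q$, and because $q$ is a projection a standard functional-calculus/perturbation argument (projections that are close are equivalent, and a projection nearly under $p_m$ is equivalent to one genuinely under $p_m - p_n$ using $q \perp p_n$) lets me conclude $q \lesssim p_m - p_n$ for some $m > n$. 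Chaining gives $p_n \lesssim p_m - p_n$.

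The implication (iii) $\Rightarrow$ (i) is where the real work lies, and I expect it to be the main obstacle. The strategy is to build, out of the approximate unit, an infinite sequence of mutually orthogonal, mutually equivalent projections each equivalent to a full corner, so as to exhibit a copy of $\mathbb{K}$ acting with the right fullness to force $A \cong A \otimes \mathbb{K}$. Concretely, using (iii) repeatedly I would extract a subsequence $p_{n_1} < p_{n_2} < \cdots$ with $p_{n_k} \lesssim p_{n_{k+1}} - p_{n_k}$ for every $k$; the differences $d_k := p_{n_{k+1}} - p_{n_k}$ are mutually orthogonal projections summing (in the strict topology) to $1 - p_{n_1}$, and each contains an equivalent copy of $p_{n_1}$. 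The delicate part is promoting the relations $p_{n_1} \lesssim d_k$ into an actual system of matrix units or, more efficiently, invoking a clean stability criterion (in the spirit of Hjelmborg–R\o rdam) that says a $\sigma$-unital $C^*$-algebra is stable precisely when every projection is properly infinite in the sense of (ii)/(iii). I would aim to cite or reprove such a criterion rather than construct the isomorphism $A \cong A \otimes \mathbb{K}$ by hand, since the hands-on construction requires careful bookkeeping to pass from subequivalences to genuine orthogonal equivalences and to verify convergence of the resulting embedding of $\mathbb{K}$.

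The two technical lemmas I will lean on throughout are the standard facts that norm-close projections are Murray–von Neumann equivalent, and that for a projection $q$ and an approximate unit $\{p_m\}$ one has $\|p_m q - q\| \to 0$; together these convert the approximate-unit hypothesis into honest subprojection relations. The main risk is sloppiness in the perturbation estimates when moving between $\lesssim$ and $\sim$, so I would state the perturbation lemma explicitly before the final implication.
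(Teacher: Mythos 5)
Your plan is correct, and it is worth knowing that the paper does strictly less than you propose: its entire proof consists of two citations, quoting the equivalence of (i) and (ii) from \cite[Theorem~3.3]{jhmr:sc} and the equivalence of (ii) and (iii) from \cite[Lemma~2.1]{jh:piscgds}, with no argument supplied at all. So where you reprove (i) $\Rightarrow$ (ii) and (ii) $\Rightarrow$ (iii) directly and defer only (iii) $\Rightarrow$ (i) to the literature, your route is genuinely more self-contained than the paper's; and since you ultimately propose to cite the Hjelmborg--R{\o}rdam stability criterion for the hard implication rather than rebuild the embedding of $\KKK$ by hand, your proof rests on exactly the same external results the paper invokes, which is legitimate. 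Your (ii) $\Rightarrow$ (iii) sketch in particular closes up cleanly: because the $p_n$ are increasing projections, $p_m - p_n$ is a projection, and $q \perp p_n$ gives $\| (p_m - p_n) q - q \| = \| p_m q - q \| \to 0$, after which the standard fact that $\| e q - q \| < 1$ for projections $e, q$ implies $q \lesssim e$ yields $p_n \sim q \lesssim p_m - p_n$. Two cautions. First, in (i) $\Rightarrow$ (ii) you cannot literally ``regard'' an arbitrary projection of a stable algebra as $p \otimes e_{11}$; an arbitrary projection in $A \otimes \KKK$ must first be cut down by $1 \otimes q_N$ for a finite-rank projection $q_N$, replaced by the nearby (hence equivalent) projection under $1 \otimes q_N$, and then shifted orthogonally by $1 \otimes s$ --- this is precisely the perturbation machinery you list at the end, so it is a gloss rather than a gap, but the identification as stated is false. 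Second, your phrase ``properly infinite in the sense of (ii)'' is a misnomer: proper infiniteness of $p$ would require two orthogonal copies of $p$ \emph{underneath} $p$, which fails for every nonzero projection of $\KKK$ even though $\KKK$ is stable; condition (ii) asks for an orthogonal copy elsewhere in $A$, and the criterion you want is the Hjelmborg--R{\o}rdam characterization of stability for $\sigma$-unital $C^*$-algebras, i.e.\ exactly \cite[Theorem~3.3]{jhmr:sc}.
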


\begin{proof} The equivalence of \textrm{(i)} and \textrm{(ii)} is
shown in \cite[Theorem~3.3]{jhmr:sc}.  The equivalence of
\textrm{(ii)} and \textrm{(iii)} is shown in
\cite[Lemma~2.1]{jh:piscgds}.
\end{proof}

\begin{lemma} \label{sum-equiv-projs} Let $A$ be a $C^*$-algebra.
Suppose $p_1, p_2, \ldots, p_n$ are mutually orthogonal projections in
$A$, and $q_1, q_2, \ldots, q_n$ are mutually orthogonal projections
in $A$ with $p_i \sim q_i$ for $1 \leq i \leq n$.  Then $\sum_{i=1}^n
p_i \sim \sum_{i=1}^n q_i$.
\end{lemma}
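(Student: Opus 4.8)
The plan is to use the definition of equivalence directly and to exhibit a single witnessing element built from the pieces. For each $i$, since $p_i \sim q_i$, I would choose $v_i \in A$ with $v_i v_i^* = p_i$ and $v_i^* v_i = q_i$; each $v_i$ is then automatically a partial isometry, as both its initial and final supports are projections. The natural candidate to witness $\sum_{i=1}^n p_i \sim \sum_{i=1}^n q_i$ is the single element $v := \sum_{i=1}^n v_i$, and the entire argument reduces to verifying that $v v^* = \sum_{i=1}^n p_i$ and $v^* v = \sum_{i=1}^n q_i$.

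First I would record the standard partial-isometry identities $v_i = p_i v_i = v_i q_i$ and $v_i^* = q_i v_i^* = v_i^* p_i$, all of which follow from $v_i = v_i v_i^* v_i$ together with $v_i v_i^* = p_i$ and $v_i^* v_i = q_i$. Next I would expand $v v^* = \sum_{i,j} v_i v_j^*$ and observe that for $i \neq j$ one has $v_i v_j^* = (v_i q_i)(q_j v_j^*) = v_i (q_i q_j) v_j^* = 0$, using the mutual orthogonality of the $q_i$. Hence only the diagonal terms survive, giving $v v^* = \sum_i v_i v_i^* = \sum_i p_i$. Symmetrically, expanding $v^* v = \sum_{i,j} v_i^* v_j$ and writing $v_i^* v_j = (v_i^* p_i)(p_j v_j) = v_i^* (p_i p_j) v_j = 0$ for $i \neq j$ uses the orthogonality of the $p_i$ and leaves $v^* v = \sum_i v_i^* v_i = \sum_i q_i$.

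The computation has essentially no serious obstacle; the only point deserving care is that the vanishing of the off-diagonal terms genuinely requires \emph{both} hypotheses. The cross terms of $v v^*$ vanish because of the orthogonality of the $q_i$, while those of $v^* v$ vanish because of the orthogonality of the $p_i$, so neither orthogonality assumption may be dropped. Since all the sums involved are finite, there are no convergence issues to address, and the equivalence $\sum_{i=1}^n p_i \sim \sum_{i=1}^n q_i$ follows at once from the two computed identities.
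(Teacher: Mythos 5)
Your proposal is correct and matches the paper's proof essentially verbatim: both sum the witnessing partial isometries $v_i$ and kill the cross terms $v_iv_j^*$ and $v_i^*v_j$ using the mutual orthogonality of the two families of projections (the paper writes $v_iv_i^*v_iv_j^*v_jv_j^* = v_ip_ip_jv_j^*$ where you factor through $v_i = p_iv_i = v_iq_i$, which is the same computation). The only cosmetic difference is your opposite labeling convention for which of $v_iv_i^*$, $v_i^*v_i$ equals $p_i$, which is immaterial since $\sim$ is symmetric.
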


\begin{proof} Since $p_i \sim q_i$ there exists $v_i \in A$ such that
$v_i^*v_i = p_i$ and $v_iv_i^* = q_i$.  Thus for $i \neq j$ we have
$v_j^* v_i = v_j^*v_jv_j^* v_iv_i^*v_i = v_j^* q_j q_i v_i = 0$ and
$v_i v_j^* = v_iv_i^*v_i v_j^* v_j v_j^* = v_i p_i p_j v_j^* = 0$.
Hence $(\sum_{i=1}^n v_i)^* \sum_{i=1}^n v_i = \sum_{i=1}^n v_i^*v_i =
\sum_{i=1}^n p_i$ and $\sum_{i=1}^n v_i ( \sum_{i=1}^n v_i)^* =
\sum_{i=1}^n v_iv_i^* = \sum_{i=1}^n q_i$.  Thus $\sum_{i=1}^n p_i
\sim \sum_{i=1}^n q_i$.
\end{proof}

\begin{proposition}\label{getcorner} Let $E$ be a graph with no
breaking vertices, and suppose that $I$ is a largest proper ideal of
$C^*(E)$ and such that $C^*(E)/I$ is purely infinite and $I$ is AF.
Then there exists a projection $p \in C^*(E)$ such that $pC^*(E)p$ is
a full corner of $C^*(E)$ and $pIp$ is stable.
\end{proposition}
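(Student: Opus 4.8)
The plan is to take $p = p_{v_0}$, where $v_0 = s(\gamma)$ is the base point of the cycle $\gamma$ furnished by Lemma~\ref{cycle-exists-lem}; that lemma also supplies an edge $f$ with $s(f)=v_0$ and $r(f)\in H$, where $I = I_{(H,B_H)} = I_H$ (the absence of breaking vertices collapsing the admissible pair to $H$). First I would check that $p_{v_0}$ is full, so that $p_{v_0}C^*(E)p_{v_0}$ is automatically a full corner. The ideal generated by $p_{v_0}$ is the gauge-invariant ideal corresponding to the saturated hereditary closure $K$ of $\{w : v_0 \geq w\}$; since $v_0 \in K$ and $v_0 \notin H$, Lemma~\ref{largest-g-i} forces $K = E^0$, so the ideal generated by $p_{v_0}$ is all of $C^*(E)$ and the corner is full.

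It remains to show $B := p_{v_0} I p_{v_0}$ is stable. Since $I$ is AF and $B$ is a hereditary subalgebra of $I$, $B$ is AF and separable, hence carries an increasing countable approximate unit of projections, and Lemma~\ref{stab-equiv} applies; I would verify condition (iii). The point is to choose the approximate unit wisely: because $B \subseteq I_H = \overline{\textrm{span}}\{s_\mu s_\nu^* : r(\mu)=r(\nu)\in H\}$, I would take each member $e_n$ to be a finite sum $\sum_{\mu \in G_n} s_\mu s_\mu^*$ of mutually orthogonal range projections indexed by paths $\mu$ running from $v_0$ into $H$, arranged so that the $e_n$ increase to an approximate unit of $B$. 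Each $e_n$ then involves only finitely many paths, of some maximal length $L_n$.

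The geometric heart of the argument exploits that $\gamma$ lies entirely in $E \setminus H$ while every $\mu$ occurring in $e_n$ terminates in $H$: for $N$ with $N|\gamma| > L_n$ one has $s_{\gamma^N}^* s_\mu = 0$ for each such $\mu$ (the paths cannot be comparable, as $\gamma^N$ is too long to be a prefix of $\mu$ and never enters $H$), so $g_N := s_{\gamma^N}s_{\gamma^N}^*$ is orthogonal to $e_n$. Setting $e_n' := s_{\gamma^N}e_n s_{\gamma^N}^* = \sum_{\mu \in G_n} s_{\gamma^N\mu}s_{\gamma^N\mu}^*$, the element $v := s_{\gamma^N}e_n$ lies in $B$ and satisfies $v^*v = e_n$ and $vv^* = e_n'$, so $e_n \sim e_n'$, while $e_n' \leq g_N$ gives $e_n' \perp e_n$. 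Arranging the approximate unit so that $G_m \supseteq G_n \cup \gamma^N G_n$ for some $m>n$ yields $e_m - e_n \geq e_n' \sim e_n$, hence $e_n \lesssim e_m - e_n$ in $B$, and Lemma~\ref{stab-equiv} delivers stability. The main obstacle is exactly the passage from single matrix units to a genuine projection: an arbitrary element of the AF corner involves paths of unbounded length, so no fixed power of $\gamma$ can push all of $B$ off itself simultaneously. The resolution is to invoke condition (iii) of Lemma~\ref{stab-equiv} rather than condition (ii), since it suffices to defeat the chosen finite-path approximate unit one projection at a time, with the power $N$ tailored to each $e_n$.
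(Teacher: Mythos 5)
Your proof is correct, and strategically it follows the same skeleton as the paper's: both invoke Lemma~\ref{cycle-exists-lem} for the cycle $\gamma$ and the entry edge $f$, build an increasing approximate unit of path projections in the compressed ideal, and use powers of $\gamma$ to move each member of the approximate unit to an equivalent orthogonal projection, concluding via condition (iii) of Lemma~\ref{stab-equiv}. But your implementation differs in ways worth recording. The paper compresses by $p = p_v + p_w$ with $w = r(f) \in H$ and indexes its approximate unit by the set $S$ of paths from $v$ to the single vertex $w$ (plus the extra summand $p_w$); you compress by $p_{v_0}$ alone and index by paths from $v_0$ into $H$. For your ``mutually orthogonal range projections'' claim you must take these to be \emph{first-entry} paths, i.e.\ $r(\mu) \in H$ and $r(\mu_i) \notin H$ for $i < |\mu|$ --- this is the one precision your sketch omits --- and with it supplied your family genuinely is an approximate unit for $p_{v_0} I p_{v_0}$, since every spanning element $s_\alpha s_\beta^*$ with $s(\alpha)=s(\beta)=v_0$ and $r(\alpha)=r(\beta)\in H$ is dominated once the unique first-entry prefixes of $\alpha$ and $\beta$ are included. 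This is in fact more robust than the paper's Case II, which factors an arbitrary such $\alpha$ as $\alpha_j\lambda$ with $\alpha_j \in S$, thereby implicitly assuming that every path from $v$ into $H$ passes through the one vertex $w$; that can fail when $H$ is entered at more than one vertex (for instance when $H$ contains two sinks, as in the paper's own example at the end of \S\ref{ex-sec}), and your indexing sidesteps the issue. Two further differences: the paper needs the AF hypothesis (no cycles in $E_H$) to show distinct elements of $S$ are non-comparable, whereas first-entry paths are pairwise non-comparable for free, so in your argument AF-ness plays no essential role in the stability step; and where the paper assembles $p_n \lesssim p_m - p_n$ termwise via Lemma~\ref{sum-equiv-projs}, your single partial isometry $s_{\gamma^N} e_n$ implements $e_n \sim s_{\gamma^N} e_n s_{\gamma^N}^* \leq e_m - e_n$ in one stroke. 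Your fullness argument (saturating the set of vertices reachable from $v_0$ and applying Lemma~\ref{largest-g-i}) is sound, though the paper's is shorter: $p_{v_0} \notin I$ and $I$ is a largest proper ideal, so no proper ideal can contain the corner.
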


\begin{proof} Lemma~\ref{cycle-exists-lem} implies that $I =
I_{(H,B_H)}$ for some saturated hereditary subset $H$ of $E^0$, and
there exists a cycle $\gamma$ in $E \setminus H$ and an edge $f \in
E^1$ with $s(f) = s(\gamma)$ and $r(f) \in H$; and furthermore, if $x
\in E^0$, then $x \geq s(\gamma)$ if and only if $x \in E^0 \setminus
H$.  Since $E$ has no breaking vertices, we have that $B_H =
\emptyset$ so that $I_{(H,B_H)}$ is the ideal generated by $\{p_v : v
\in H \}$ and we may write $I_{(H,B_H)}$ as $I_H$.

Let $v = s(f) = s(\gamma)$ and let $w = r(f)$.  Define $p := p_v +
p_w$.  Suppose $J \triangleleft C^*(E)$ and $pC^*(E)p \subseteq J$.
Since $v \notin H$ we see that $p_v \notin I$ and hence $p_v \in
pC^*(E)p \setminus I \subseteq J \setminus I$.  Thus $J \nsubseteq I$
and the fact that $I$ is a largest proper ideal implies that $J =
C^*(E)$.  Hence $pC^*(E)p$ is a full corner of $C^*(E)$.

In addition, since there are no breaking vertices
\begin{align*} pIp &= pI_Hp \\ &= p \left(
\overline{\operatorname{span}} \{ s_\alpha s_\beta^* : r(\alpha) =
r(\beta) \in H \} \right)p \\ &= \overline{\operatorname{span}} \{
ps_\alpha s_\beta^*p : r(\alpha) = r(\beta) \in H \} \\ &=
\overline{\operatorname{span}} \{ s_\alpha s_\beta^* : r(\alpha) =
r(\beta) \in H \text{ and } s(\alpha), s(\beta) \in \{v, w \} \}.
\end{align*} Let $S := \{ \alpha \in E^* : s(\alpha) = v \text{ and }
r(\alpha) = w \}$.  Since $S$ is a countable set we may list the
elements of $S$ and write $S = \{ \alpha_1, \alpha_2, \alpha_3, \ldots
\}$.  Define $p_0 := p_w$ and $p_n := p_w + \sum_{k=1}^n s_{\alpha_k}
s_{\alpha_k}^*$ for $n \in \N$.

We will show that for $\mu, \nu \in S$ we have
\begin{equation} \label{S-eqn-displayed} s_\mu^* s_\nu
:= \begin{cases} p_{r(\mu)} & \text{ if $\mu = \nu$} \\ 0 & \text{
otherwise.} \end{cases}
\end{equation} First suppose that $s_\mu^* s_\nu \neq 0$.  Then one of
$\mu$ and $\nu$ must extend the other.  Suppose $\mu$ extends $\nu$.
Then $\mu = \nu \lambda$ for some $\lambda \in E^*$.  Thus $s(\lambda)
= r(\nu) = w$ and $r(\lambda) = r(\mu) = w$.  However, $I_H$ is an
AF-algebra, and $C^*(E_H)$ is strongly Morita equivalent to $I_H$
\cite[Proposition~3.4]{bhrs:iccig}, so $C^*(E_H)$ is an AF-algebra.
Thus $E_H$ contains no cycles.  Since $\lambda$ is a path in $E_H$
with $s(\lambda) = r(\lambda) = w$, and since $E_H$ contains no
cycles, we may conclude that $\lambda = w$.  Thus $\mu = \nu$.  A
similar argument works when $\nu$ extends $\mu$.  Hence the equation
in \eqref{S-eqn-displayed} holds.  It follows that the elements of the
set $\{ s_\alpha s_\alpha^* : \alpha \in S \} \cup \{p_w \}$ are
mutually orthogonal projections, and hence $\{ p_n \}_{n=0}^\infty$ is
an sequence of increasing projections.

Next we shall show that $\{ p_n \}_{n=0}^\infty$ is an approximate
unit for $pIp$.  Given $s_\alpha s_\beta^*$ with $r(\alpha) = r(\beta)
\in H$ and $s(\alpha), s(\beta) \in \{v, w \}$, we consider two cases.

\noindent \textsc{Case I:} $s(\alpha) = w$.  Then for any $\alpha_k
\in S$ we see that $(s_{\alpha_k} s_{\alpha_k}^*) s_\alpha s_\beta^* =
s_{\alpha_k} s_{\alpha_k}^* p_w s_\alpha s_\beta^* = 0$.  In addition,
$p_w (s_\alpha s_\beta^*) = s_\alpha s_\beta^*$.  Thus $\lim_{n \to
\infty} p_n s_\alpha s_\beta^* = s_\alpha s_\beta^*$.

\noindent \textsc{Case II:} $s(\alpha) = v$.  Then $\alpha = \alpha_j
\lambda$ for some $\alpha_j \in S$ and some $\lambda \in E_H^*$ with
$s(\lambda) = w$.  We have $p_w (s_\alpha s_\beta) = 0$, and also
\eqref{S-eqn-displayed} implies that $$(s_{\alpha_k} s_{\alpha_k}^*)
s_\alpha s_\beta^* = s_{\alpha_k} s_{\alpha_k}^* s_{\alpha_j}
s_\lambda s_\beta^* = \begin{cases} s_\alpha s_\beta^* & \text{ $k =
j$} \\ 0 & \text{ $k \neq j$.} \end{cases}$$ Thus $\lim_{n \to \infty}
p_n s_\alpha s_\beta^* = s_\alpha s_\beta^*$.

The above two cases imply that $\lim_{n \to \infty} p_n x = x$ for any
$x \in \operatorname{span} \{ s_\alpha s_\beta^* : r(\alpha) =
r(\beta) \in H \text{ and } s(\alpha), s(\beta) \in \{v, w \} \}$.
Furthermore, an $\epsilon / 3$-argument shows that $\lim_{n \to
\infty} p_n x = x$ for any $x \in pI_Hp =
\overline{\operatorname{span}} \{ s_\alpha s_\beta^* : r(\alpha) =
r(\beta) \in H \text{ and } s(\alpha), s(\beta) \in \{v, w \} \}$.  A
similar argument shows that $\lim_{n \to \infty} x p_n = x$ for any $x
\in pI_Hp$.  Thus $\{ p_n \}_{n=1}^\infty$ is an approximate unit for
$pI_Hp$.

We shall now show that $pI_H p$ is stable.  For each $n \in \N$
define $$\lambda^n := \underbrace{ \gamma \gamma \ldots
\gamma}_\text{$n$ times} f.$$ For any $k, n \in \N$ we
have $$s_{\lambda^n} s_{\lambda^n}^* \sim s_{\lambda^n}^*
s_{\lambda^n} = p_{r(\lambda^n)} = p_w = s_{\alpha_k}^* s_{\alpha_k}
\sim s_{\alpha_k} s_{\alpha_k}^*.$$

For any $n \in \N$ choose $q$ large enough that $|\lambda^q| \geq
|\alpha_k|$ for all $1 \leq k \leq n$.  Then for all $j \in \N$ we see
that $\lambda^{q+j} \in S$ and $\lambda^{q+j} \neq \alpha_k$ for all
$1 \leq k \leq n$.  Thus for any $1 \leq k \leq n$ we
have $$s_{\alpha_k} s_{\alpha_k}^* \sim s_{\alpha_k}^* s_{\alpha_k} =
p_{r(\alpha_k)} = p_w = p_{r(\lambda^{q+k})} = s_{\lambda^{q+k}}^*
s_{\lambda^{q+k}} \sim s_{\lambda^{q+k}} s_{\lambda^{q+k}}^*$$
and $$p_w = s_{\lambda^q}^* s_{\lambda^q} \sim s_{\lambda^q}
s_{\lambda^q}^*.$$ It follows from Lemma~\ref{sum-equiv-projs}
that $$p_n = p_w + \sum_{k=1}^n s_{\alpha_k} s_{\alpha_k}^* \lesssim
\sum_{k=0}^n s_{\lambda^{q+k}} s_{\lambda^{q+k}}^* \lesssim p_m -
p_n$$ where $m$ is chosen large enough that $\lambda^{q+k} \in \{
\alpha_1, \alpha_2, \ldots, \alpha_m \}$ for all $0 \leq k \leq n$.
Lemma~\ref{stab-equiv} shows that $pI_Hp$ is stable.
\end{proof}

\section{Classification} \label{class-sec}

In this section we state and prove our main results.  We apply the
methods of \cite{segrer:cecc} to classify certain extensions of graph
$C^*$-algebras in terms of their six-term exact sequences of
$K$-groups.  To do this we will need to discuss classes of
$C^*$-algebras satisfying various properties.  We give definitions of
these properties here, and obtain a lemma that is a consequence of
\cite[Theorem~3.10]{segrer:cecc}.

\begin{definition}[see Definition~3.2 of
\cite{segrer:cecc}] \label{Class-4-props-def} We will be interested in
classes $\mathcal{C}$ of separable nuclear unital simple $C
^*$-algebras in the bootstrap category $\mathcal{N}$ satisfying the
following properties:
\begin{itemize}
\item[(I)] Any element of $\mathcal{C}$ is either purely infinite or
stably finite.
\item[(II)] $\mathcal{C}$ is closed under tensoring with
$\textrm{M}_n$, where $\textrm{M}_n$ is the $C^*$-algebra of $n$ by
$n$ matrices over $\mathcal{C}$.
\item[(III)] If $A$ is in $\mathcal{C}$, then any unital hereditary
$C^*$-subalgebra of $A$ is in $\mathcal{C}$.
\item[(IV)] For all $A$ and $B$ in $\mathcal{C}$ and for all $x$ in
$KK(A, B)$ which induce an isomorphism from $(K^+_*(A), [1_A])$ to
$(K^+_* (A), [1_B ])$, there exists a $*$-isomorphism $\alpha : A \to
B$ such that $KK (\alpha) = x$.
\end{itemize}
\end{definition}

\begin{definition} If $B$ is a separable stable $C^*$-algebra, then we
say that $B$ has the \emph{corona factorization property} if every
full projection in $\mathcal{M} (B)$ is Murray-von Neumann equivalent
to $1_{\mathcal{M}(B)}$.
\end{definition}

\begin{lemma}[Cf.~Theorem~3.10 of \cite{segrer:cecc}] \label{ERR-lem}
Let $\mathcal{C}_I$ and $\mathcal{C}_Q$ be classes of unital nuclear
separable simple $C^*$-algebras in the bootstrap category
$\mathcal{N}$ satisfying the properties of
Definition~\ref{Class-4-props-def}.  Let $A_1$ and $A_2$ be in
$\mathcal{C}_Q$ and let $B_1$ and $B_2$ be in $\mathcal{C}_I$ with
$B_1 \otimes \KKK$ and $B_2 \otimes \KKK$ satisfying the corona
factorization property.  Let \cbstart{}%
\[ \xymatrix{ {\ext_1:}&{0}\ar[r]&{B_1 \otimes
\KKK}\ar[r]&{E_1}\ar[r]&{A_1}\ar[r]&{0} & }
\]
\[ \xymatrix{ {\ext_2:}&{0}\ar[r]&{B_2 \otimes
\KKK}\ar[r]&{E_2}\ar[r]&{A_2}\ar[r]&{0} & }
\] be \textbf{essential} and \textbf{unital} extensions.  If $\Ksix
\cbend{}%
(\ext_1) \cong \Ksix (\ext_2)$, then $E_1 \otimes \KKK \cong E_2
\otimes \KKK$.
\end{lemma}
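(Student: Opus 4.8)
The plan is to reduce Lemma~\ref{ERR-lem} directly to \cite[Theorem~3.10]{segrer:cecc} by checking that the hypotheses of that theorem are met, rather than reproving any of the $KK$-theoretic machinery. The statement as written is essentially a specialization of the cited theorem to the situation where the two ideal classes $\mathcal{C}_I$ and the two quotient classes $\mathcal{C}_Q$ are fixed in advance, so the work consists of verifying that each structural ingredient demanded by \cite[Theorem~3.10]{segrer:cecc} is present. First I would recall that both $\ext_1$ and $\ext_2$ are essential and unital extensions whose ideals $B_i \otimes \KKK$ are separable and stable, whose quotients $A_i$ are unital nuclear separable simple $C^*$-algebras in $\mathcal{N}$, and whose ideals $B_i$ are likewise in $\mathcal{N}$; all of these are precisely the standing assumptions of the cited result once one observes that membership in $\mathcal{N}$ is inherited by the relevant building blocks.

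Next I would confirm the two hypotheses that are not merely bookkeeping. The classes $\mathcal{C}_I$ and $\mathcal{C}_Q$ are assumed to satisfy properties (I)--(IV) of Definition~\ref{Class-4-props-def}, and these are exactly the axioms that \cite[Theorem~3.10]{segrer:cecc} requires of the classes from which the ideal and quotient are drawn; in particular property (IV) supplies the $KK$-existence statement that drives the classification, and properties (I)--(III) guarantee the internal closure conditions needed to realize the relevant hereditary subalgebras and matrix amplifications. The other essential hypothesis is the corona factorization property for $B_1 \otimes \KKK$ and $B_2 \otimes \KKK$, which we assume outright; this is what allows the absorption argument in \cite{segrer:cecc} to go through and is the reason the theorem can conclude stable isomorphism of the total algebras from an isomorphism of the six-term sequences.

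With these verifications in place, the hypothesis $\Ksix(\ext_1) \cong \Ksix(\ext_2)$ is precisely the invariant that \cite[Theorem~3.10]{segrer:cecc} takes as input, and its conclusion $E_1 \otimes \KKK \cong E_2 \otimes \KKK$ is exactly the output we want. I expect the only genuine point requiring care to be the matching of conventions: I must check that the notion of "isomorphism of six-term exact sequences" used here (the commuting diagram with ordered-group isomorphisms $\alpha,\beta,\gamma$ on the $K_0$-level) coincides with the invariant used in \cite{segrer:cecc}, and that "unital extension" in the sense of $E_i$ being unital is compatible with the unitality hypotheses imposed there on the quotient. Once the dictionary between the two formulations is pinned down, the lemma follows immediately by citation, so the substance of the argument is entirely in confirming that Definition~\ref{Class-4-props-def} and the stated corona factorization hypotheses translate verbatim into the hypotheses of \cite[Theorem~3.10]{segrer:cecc}.
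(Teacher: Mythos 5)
Your plan---check the hypotheses and cite \cite[Theorem~3.10]{segrer:cecc} verbatim---does not go through as stated, and the step you dismiss as ``matching of conventions'' is exactly where the content of the paper's proof lies. Theorem~3.10 of \cite{segrer:cecc} applies to extensions that are essential and \emph{full} (fullness of the Busby invariant), with the relevant algebras stable; but the extensions $\ext_1, \ext_2$ in the lemma are \emph{unital}, so the $E_i$ and the quotients $A_i$ are unital rather than stable, and fullness appears nowhere among the lemma's hypotheses. So the lemma is not a verbatim specialization of the cited theorem, and ``follows immediately by citation'' is a genuine gap: you must transform the extensions before the theorem can be invoked, and you must prove that the invariant survives the transformation.

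The paper bridges this by tensoring each $\ext_i$ with $\KKK$ to obtain $\ext_i' : 0 \to B_i \otimes \KKK \to E_i \otimes \KKK \to A_i \otimes \KKK \to 0$ (using $\KKK \otimes \KKK \cong \KKK$ to rewrite the ideal), and then verifying three things your proposal omits: (a) $\ext_i'$ is essential, via the Rieffel correspondence of ideals under the Morita equivalence between $E_i$ and $E_i \otimes \KKK$; (b) $\ext_i'$ is full---here one first uses \cite[Proposition~1.5]{segrer:cecc} to see that the essential unital extension $\ext_i$ with stable ideal is full, and then \cite[Proposition~1.6]{segrer:cecc} to see that fullness persists after tensoring with $\KKK$; and (c) the invariant is unchanged, $\Ksix(\ext_i) \cong \Ksix(\ext_i')$, because the corner inclusions $B_i \otimes \KKK \hookrightarrow (B_i \otimes \KKK) \otimes \KKK$, $E_i \hookrightarrow E_i \otimes \KKK$, $A_i \hookrightarrow A_i \otimes \KKK$ are full inclusions and hence induce (order-preserving) isomorphisms on all six $K$-groups. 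Only after (a)--(c) does \cite[Theorem~3.10]{segrer:cecc} apply---to $\ext_1'$ and $\ext_2'$, not to $\ext_1$ and $\ext_2$---and its conclusion for the stabilized extensions is precisely $E_1 \otimes \KKK \cong E_2 \otimes \KKK$. Note in particular that fullness is the hypothesis the paper flags, already in its introduction, as the genuinely difficult requirement in this context; a proof that never mentions it cannot be a correct reduction to \cite[Theorem~3.10]{segrer:cecc}.
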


\begin{proof} Tensoring the extension $\ext_1$ by $\KKK$ we obtain a
short exact sequence $\ext_1'$ and and vertical maps
\[ \xymatrix{ {\ext_1:}&{0}\ar[r]&{B_1 \otimes
\KKK}\ar[r]\ar@{^{(}->}[d]&{E_1}\ar[r]\ar@{^{(}->}[d]&{A_1}\ar@{^{(}->}[d]\ar[r]
&{0}\\ {\ext_1':}&{0}\ar[r]&{(B_1 \otimes \KKK) \otimes
\KKK}\ar[r]&{E_1 \otimes \KKK}\ar[r]&{A_1 \otimes \KKK}\ar[r]&{0}}
\] from $\ext_1$ into $\ext'_1$ that are full inclusions.  These full
inclusions induce isomorphisms of $K$-groups and hence we have that
$\Ksix (\ext_1) \cong \Ksix (\ext_1')$.  In addition, since $\ext_1$
is essential, $B_1 \otimes \KKK$ is an essential ideal in $E_1$, and
the Rieffel correspondence between the strongly Morita equivalent
$C^*$-algebras $E_1$ and $E_1 \otimes \KKK$ implies that $(B_1 \otimes
\KKK) \otimes \KKK$ is an essential ideal in $E_1 \otimes \KKK$, so
that $\ext_1'$ is an essential extension.  Furthermore, since $B_1
\otimes \KKK$ is stable and $\ext_1$ is essential and full \cite[Proposition~1.5]{segrer:cecc}, it follows
from \cite[Proposition~1.6]{segrer:cecc} that $\ext_1'$ is full.
Moreover, since $\KKK \otimes \KKK \cong \KKK$, we may rewrite
$\ext_1'$ as
\[ \xymatrix{ {\ext_1':}&{0}\ar[r]&{B_1 \otimes \KKK}\ar[r]&{E_1
\otimes \KKK}\ar[r]&{A_1 \otimes \KKK}\ar[r]&{0}.}
\] By a similar argument, there is an essential and full extension
\[ \xymatrix{ {\ext_2':}&{0}\ar[r]&{B_2 \otimes \KKK}\ar[r]&{E_2
\otimes \KKK}\ar[r]&{A_2 \otimes \KKK}\ar[r]&{0}}
\] such that $\Ksix (\ext_2') \cong \Ksix (\ext_2)$.  Thus $\Ksix
(\ext_1') \cong \Ksix (\ext_2')$, and \cite[Theorem~3.10]{segrer:cecc}
implies that $E_1 \otimes \KKK \cong E_2 \otimes \KKK$.
\end{proof}

\begin{lemma} \label{full-inclusions-lem} Let $A$ be a $C^*$-algebra
and let $I$ be a largest proper ideal of $A$.  If $p \in A$ is a full
projection, then the inclusion map $pIp \hookrightarrow I$ and the
inclusion map $pAp/pIp \hookrightarrow A/I$ are both full inclusions.
\end{lemma}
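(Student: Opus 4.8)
We have a $C^*$-algebra $A$, a largest proper ideal $I$, and a full projection $p \in A$. We must show two things: first, that the inclusion $pIp \hookrightarrow I$ is full, meaning the image $pIp$ generates $I$ as an ideal (equivalently, $pIp$ is not contained in any proper ideal of $I$); and second, that the inclusion $pAp/pIp \hookrightarrow A/I$ is full, meaning the image generates $A/I$ as an ideal. Let me think about what tools are available.

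**Key facts.** The hypothesis that $p$ is a full projection in $A$ means $pAp$ is a full corner: the ideal of $A$ generated by $p$ is all of $A$. A standard fact about corners is that there is a lattice isomorphism (the Rieffel correspondence, since $pAp$ and $A$ are Morita equivalent via the $pAp$–$A$ bimodule $pA$) between ideals of $A$ and ideals of $pAp$, given by $J \mapsto pJp$. Under this correspondence, $pIp$ corresponds to $I$. The crucial point that $I$ is a largest proper ideal should force $pIp$ to be the largest proper ideal of $pAp$ as well, and this rigidity is what drives both fullness assertions.

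**Plan for the proof.** My plan is to prove the two fullness statements separately, in each case by taking an arbitrary ideal containing the relevant image and showing it must be the whole algebra.

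For the first inclusion $pIp \hookrightarrow I$, I would argue as follows. Note that $pIp$ is a corner of $I$ by the compression $x \mapsto pxp$ — but I must be careful that $p$ need not lie in $I$. Still, $pIp = (pIp)$ makes sense as a hereditary subalgebra of $I$. To show it is full in $I$, suppose $J$ is an ideal of $I$ with $pIp \subseteq J$. Since $I$ is an ideal of $A$, I would like to pass to the ideal $\overline{AJA}$ of $A$ generated by $J$, and use fullness of $p$ in $A$. The clean route is: the ideal of $A$ generated by $pIp$ contains $pAp \cdot I \cdot pAp$ and, because $p$ is full in $A$, one shows that the ideal generated by $pIp$ in $A$ is exactly $I$. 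Then invoking the Rieffel correspondence for the full corner $pAp$, ideals of $I$ containing $pIp$ correspond to ideals of $pAp$ containing $pIp$; since $pIp$ is the image of $I$, the only such ideal of $I$ is $I$ itself, giving fullness.

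For the second inclusion, I would first identify $pAp/pIp$ with $\bar{p}(A/I)\bar{p}$, where $\bar{p}$ is the image of $p$ in $A/I$. The map $pAp \to \bar p (A/I)\bar p$ induced by the quotient $A \to A/I$ is surjective with kernel $pIp$, so $pAp/pIp \cong \bar p(A/I)\bar p$ and under this identification the inclusion into $A/I$ becomes the corner embedding $\bar p(A/I)\bar p \hookrightarrow A/I$. Thus fullness reduces to showing $\bar p$ is a full projection in $A/I$. Since $p$ is full in $A$ and quotient maps send full elements to full elements (the image of a full element generates the quotient), $\bar p$ is full in $A/I$, and a full corner is automatically full.

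**The main obstacle.** The subtle point — and the reason the hypothesis that $I$ is a \emph{largest} proper ideal is needed rather than just any ideal — is verifying that the ideal of $A$ generated by $pIp$ is precisely $I$, not something smaller. Here I expect to use that $I$ is essential (a largest proper ideal is essential, as noted after the definition in the excerpt), so $pIp \neq 0$ whenever $pAp \neq 0$, together with the largest-proper-ideal property to pin down exactly which ideal $pIp$ generates. Concretely, the ideal $\langle pIp\rangle_A$ is contained in $I$ (since $I$ is an ideal and $pIp\subseteq I$), and if it were a \emph{proper} subideal, the Rieffel correspondence would exhibit a proper ideal of $pAp$ strictly between $0$ and $pIp$ whose preimage contradicts $I$ being largest; ruling this out is the heart of the matter. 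I would carry this out by transporting the chain of ideals through the corner correspondence and invoking Lemma~\ref{largest-g-i}-style rigidity, though here purely at the level of abstract $C^*$-algebra ideal theory rather than graph combinatorics.
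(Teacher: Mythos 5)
Your proposal is essentially sound, and for the first inclusion its core mechanism coincides with the paper's: the paper also invokes the Rieffel correspondence $J \mapsto pJp$ for the full corner $pAp$, takes an ideal $J$ of $I$ (hence of $A$, since an ideal of an ideal is an ideal) containing $pIp$, compresses to get $pIp \subseteq pJp \subseteq pIp$, and concludes $J = I$ from bijectivity of the correspondence. That direct compression is cleaner than your detour through the ideal of $A$ generated by $pIp$. For the second inclusion you genuinely diverge: the paper simply observes that $I$ being a largest proper ideal forces $A/I$ to be simple, so the (nonzero) inclusion $pAp/pIp \hookrightarrow A/I$ is automatically full; you instead identify $pAp/pIp \cong \bar{p}(A/I)\bar{p}$, where $\bar{p}$ is the image of $p$ in $A/I$ (using $pAp \cap I = pIp$), and note that $\bar{p}$ is full in $A/I$ because the image of a full projection under a surjection is full, and a full corner is full. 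Your route is more general --- it proves fullness of the second inclusion for an arbitrary proper ideal $I$, not just a largest one --- while the paper's is shorter given the standing hypothesis.

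One correction to your ``main obstacle'' paragraph: the largest-proper-ideal hypothesis is \emph{not} what makes the ideal of $A$ generated by $pIp$ equal to $I$. That identity follows from fullness of $p$ alone: $I = \overline{AIA} = \overline{(ApA)\,I\,(ApA)} = \overline{A\,pIp\,A}$, which is just the inverse Rieffel map applied to $pIp$. So there is nothing to ``rule out,'' no appeal to essentiality of $I$ is needed, and the step you defer to ``Lemma~\ref{largest-g-i}-style rigidity'' is already done; as written, your sketch leaves its self-declared heart of the matter unproved while misattributing its source. In fact both halves of the lemma, argued your way, hold for every proper ideal $I$; in the paper's proof the hypothesis enters only through simplicity of $A/I$ in the second half, and your quotient argument shows even that use is avoidable.
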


\begin{proof} Since $p$ is a full projection, we see that $A$ is
Morita equivalent to $pAp$ and the Rieffel correspondence between
ideals takes the form $J \mapsto pJp$.  If $J$ is an ideal of $I$ with
$pIp \subseteq J$, then by compressing by $p$ we obtain $pIp \subseteq
pJp$.  Since the Rieffel correspondence is a bijection, this implies
that $I \subseteq J$, and because $J$ is an ideal contained in $I$, we
get that $I = J$.  Hence $pIp \hookrightarrow I$ is a full inclusion.
Furthermore, because $I$ is a largest proper ideal of $A$, we know
that $A/I$ is simple and thus $pAp/pIp \hookrightarrow A/I$ is a full
inclusion.
\end{proof}

\begin{theorem} \label{clas} If $A$ is a graph $C^*$-algebra with
exactly one proper nontrivial ideal $I$, then $A$ classified up to
stable isomorphism by the six-term exact sequence
\[ \xymatrix{ {K_0(I)}\ar[r]&{K_0(A)}\ar[r]&{K_0(A/I)}\ar[d]\\
{K_1(A/I)}\ar[u]&{K_1(A)}\ar[l]&{K_1(I)}\ar[l]}
\] with all $K_0$-groups considered as ordered groups. In other words,
if $A$ is a graph $C^*$-algebras with precisely one proper nontrivial
ideal $I$, if $A'$ is a graph $C^*$-algebras with precisely one proper
nontrivial ideal $I'$, and if
\[ \xymatrix{ {\ext_1:}&{0}\ar[r]& \, I \, \ar[r]&{\, A \,}\ar[r]&{\,
A/I \, }\ar[r]&{0} & }
\]
\[ \xymatrix{
{\ext_2:}&{0}\ar[r]&I'\ar[r]&{A'}\ar[r]&{A'/I'}\ar[r]&{0} & }
\] are the associated extensions, then $A \otimes \KKK \cong A'
\otimes \KKK$ if and only if $\Ksix (\ext_1) \cong \Ksix (\ext_2)$.

Moreover, in cases $\fiin$, $\infi$, and $\inin$, the order structure
on $K_0(A)$ may be removed from the invariant leaving it still
complete. And in case $\fifi$, the ordered group $K_0(A)$ is a
complete invariant in its own right.
\end{theorem}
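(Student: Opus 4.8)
The plan is to prove the two implications separately, reducing the substantial one to Lemma~\ref{ERR-lem} after dispatching the purely finite case through Elliott's theorem. The forward implication is structural: if $A \otimes \KKK \cong A' \otimes \KKK$, then since tensoring by $\KKK$ induces a lattice isomorphism of ideals, both stabilizations have a unique proper nontrivial ideal, namely $I \otimes \KKK$ and $I' \otimes \KKK$, and any $*$-isomorphism between them must carry one onto the other. The resulting commuting ladder of extensions induces an isomorphism $\Ksix(\ext_1) \cong \Ksix(\ext_2)$; because the vertical maps are $*$-isomorphisms they preserve the positive cones on the $K_0$-groups, and $K_i(A) \cong K_i(A \otimes \KKK)$, so the ordered six-term sequences agree. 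This direction uses no structure theory.

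For the reverse implication I would first apply Lemma~\ref{E-structure-lem} and Remark~\ref{cases-remark} to write $I = I_H$ with $B_H = \emptyset$ and $A/I \cong C^*(E \setminus H)$, both simple and each either AF or Kirchberg, yielding the four cases $\fifi$, $\fiin$, $\infi$, $\inin$. In case $\fifi$, every cycle of $E$ lies entirely inside $H$ or entirely outside $H$ because $H$ is hereditary; as both subquotients are AF the two subgraphs are acyclic, so $E$ is acyclic and $A$ is AF. Elliott's theorem then classifies $A$ up to stable isomorphism by the ordered group $K_0(A)$. Since $K_1$ vanishes throughout and the order ideal of $K_0(A)$ corresponding to $I$ together with the quotient recovers the entire sequence, ordered $K_0(A)$ and $\Ksix(\ext_1)$ are equivalent invariants; this settles the reverse implication in case $\fifi$ and simultaneously establishes the last sentence of the theorem for that case.

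In the three remaining cases I would apply Lemma~\ref{ERR-lem}. The key reduction is to produce a \textbf{full} projection $p \in A$ such that $pIp$ is stable; the corner $pAp$ is then automatically unital (with unit $p$) and full, hence Morita equivalent to $A$. Granting such a $p$, the projection $p_w \in pIp$ is full in the simple algebra $pIp$, so Brown's theorem together with stability gives $pIp \cong B \otimes \KKK$ with $B = p_w I p_w$ unital simple; the extension $0 \to pIp \to pAp \to pAp/pIp \to 0$ is then essential, since $I$ is essential, and unital. Its six-term sequence is isomorphic to $\Ksix(\ext_1)$, the full inclusions provided by Lemma~\ref{full-inclusions-lem} inducing order isomorphisms on $K$-theory, so the hypothesis transfers to the corner extension. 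Taking $\mathcal{C}_I$ and $\mathcal{C}_Q$ to be the classes of separable nuclear unital simple AF or Kirchberg algebras in $\mathcal{N}$ --- which satisfy Definition~\ref{Class-4-props-def} by Elliott's and the Kirchberg--Phillips theorems and whose stabilizations have the corona factorization property --- Lemma~\ref{ERR-lem} yields $pAp \otimes \KKK \cong p'A'p' \otimes \KKK$, and therefore $A \otimes \KKK \cong A' \otimes \KKK$.

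The main obstacle is exactly the construction of the full projection $p$ with $pIp$ stable. Proposition~\ref{getcorner} accomplishes this in case $\fiin$, taking $p = p_v + p_w$ built from the cycle $\gamma$ and edge $f$ of Lemma~\ref{cycle-exists-lem} and verifying stability through the approximate-unit criterion of Lemma~\ref{stab-equiv}. For cases $\infi$ and $\inin$ the ideal $I$ is a non-unital simple purely infinite algebra, hence already stable by Zhang's dichotomy; nonetheless one must still exhibit a full projection whose compression of $I$ is stable, which I would obtain by the same mechanism, now using a cycle situated inside $H$ (available since $C^*(E_H)$ is purely infinite) to manufacture infinitely many mutually orthogonal equivalent projections forming an approximate unit for $pIp$ and to verify $p_n \lesssim p_m - p_n$. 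Finally, the order refinements follow from the triviality of the $K_0$-order on purely infinite simple algebras: in case $\inin$ every vertex of $E$ connects to a cycle with an exit, so $A$ itself is purely infinite and all $K_0$-orders are trivial, while in cases $\fiin$ and $\infi$ a direct computation with the six-term sequence, using that the purely infinite end has trivial order, recovers the positive cone of $K_0(A)$ from the ordered AF end and the connecting maps, so that the order on $K_0(A)$ carries no independent information and may be removed.
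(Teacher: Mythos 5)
Your overall architecture coincides with the paper's: a trivial forward implication; the class of unital simple separable AF and Kirchberg algebras as $\mathcal{C}_I$ and $\mathcal{C}_Q$, verified against Definition~\ref{Class-4-props-def} and the corona factorization property; case $\fifi$ dispatched by showing $A$ is AF and invoking Elliott (your graph-theoretic route --- every cycle lies wholly inside or outside $H$, both subgraphs are acyclic since the subquotients are AF, hence $E$ is acyclic --- is a legitimate substitute for the paper's citation of Brown's extension theorem); case $\fiin$ via Proposition~\ref{getcorner}; then passage to a unital, essential corner extension, transfer of $\Ksix$ through the full inclusions of Lemma~\ref{full-inclusions-lem}, and an appeal to Lemma~\ref{ERR-lem}, with the order statements observed at the end. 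Your use of Brown's theorem on the full projection $p_w$ to write $pIp \cong B \otimes \KKK$ with $B$ unital is also sound and essentially what the paper does.

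The one genuine flaw is in cases $\infi$ and $\inin$. You propose to get a full projection $p$ with $pIp$ stable ``by the same mechanism'' as Proposition~\ref{getcorner}, now using a cycle situated inside $H$. That mechanism does not transfer: the proof of Proposition~\ref{getcorner} hinges on equation \eqref{S-eqn-displayed}, the mutual orthogonality of the range projections $s_\alpha s_\alpha^*$ for $\alpha \in S$, and this orthogonality is established precisely because $E_H$ contains no cycles ($I$ is AF there), which forces distinct paths from $v$ to $w$ not to extend one another. Once $H$ contains a cycle through $w$, paths in $S$ do extend one another, the projections fail to be orthogonal, and the increasing approximate unit $\{p_n\}$ with $p_n \lesssim p_m - p_n$ is not obtained this way. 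The repair is much simpler than your sketch and is the paper's actual argument: take $p := p_v$ for any single vertex $v \in E^0 \setminus H$. Then $p \notin I$, so fullness of $p$ is immediate from $I$ being the largest proper ideal; and $pIp$ is a hereditary subalgebra of the simple purely infinite $I$, hence itself simple and purely infinite, and it is nonunital (a unital ideal is a direct summand, while $pIp$ is a proper \emph{essential} ideal of $pAp$, essentiality passing through the Rieffel correspondence because a largest proper ideal is essential). Zhang's dichotomy therefore gives stability of the corner $pIp$ directly --- the fact you half-invoke for $I$ itself applies verbatim to the compression, and no bespoke approximate-unit construction is needed. With this substitution the remainder of your argument goes through exactly as in the paper.
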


\begin{proof} It is straightforward to show that $A \otimes \KKK \cong
A' \otimes \KKK$ implies that $\Ksix (\ext_1) \cong \Ksix (\ext_2)$.
Thus we need only establish the converse.  To do this, we begin by
assuming that $\Ksix (\ext_1) \cong \Ksix (\ext_2)$.

We define $\CCap$ as the union of the class of unital simple and
separable $AF$-algebras and the class of simple, nuclear, unital, and
separable purely infinite \cstar-algebras in the bootstrap category.
The category $\CCap$ meets all the requirements in the list in
Definition~\ref{Class-4-props-def}: We clearly have that each algebra
in $\CCap$ is either purely infinite or stably finite, and that
$\CCap$ is closed under passing to matrices and unital hereditary
subalgebras. We also need to prove that the Elliott invariant is
complete for $\CCap$, and this follows by the classification results
of Elliott \cite[Theorem~4.3]{gae:cilssfa} and Kirchberg-Phillips (see
\cite[Theorem~C]{kir:cpickt} and \cite[\S 4.2]{phi:ctnpisc}) after
noting that the classes are obviously distinguishable by the nature of
the positive cone in $K_0$.  Finally, as recorded in
\cite[Theorem~3.9]{segrer:cecc}, the stabilizations of the
$C^*$-algebras in $\CCap$ all have the corona factorization property
according to \cite[Theorem 5.2]{dkpwn:cfpaue} and \cite[Proposition
2.1]{pwn:cfp}.

 It follows from \cite[Lemma~1.3]{kdjhhsw:srga} and
\cite[Corollary~3.5]{bhrs:iccig} that $I$ and $A/I$ are simple graph
$C^*$-algebras and thus each of $I$ and $A/I$ is either an AF-algebra
or a purely infinite algebra \cite[Remark~2.16]{ddmt:cag}.  Similarly
for $I'$ and $A'/I'$.  Since $\Ksix (\ext_1) \cong \Ksix (\ext_2)$, we
see that $K_0(I) \cong K_0(I')$ and $K_0(A/I) \cong K_0(A'/I')$ as
ordered groups.  By considering the positive cone in these groups, we
may conclude that $I$ and $I'$ are either both purely infinite or both
AF-algebras, and also $A/I$ and $A'/I'$ are either both purely
infinite or both AF-algebras.  Thus $A$ and $A'$ both fall into one of
the four cases described in Remark~\ref{cases-remark}.

\noindent\textbf{Cases $\inin$, $\infi$}\\ Write $A = C^*(E)$ for some
graph $E$.  Since $I$ is a largest proper ideal,
Lemma~\ref{largest-g-i} implies that $I = I_{(H,B_H)}$ for some
saturated hereditary subset $H \subsetneq E^0$.  If we let $v \in E^0
\setminus H$, and define $p :=p_v$, then $p \notin I$.  Since $I$ is a
largest proper ideal in $A$, this implies that the projection $p$ is
full.  Thus we obtain a full hereditary subalgebra $pAp$, and as noted
in Lemma~\ref{full-inclusions-lem} we have that all vertical maps in
\[ \xymatrix{
{\ext'_1:}&{0}\ar[r]&{pIp}\ar[r]\ar@{^{(}->}[d]&{pAp}\ar[r]\ar@{^{(}->}[d]&{pAp/pIp}\ar@{^{(}->}[d]\ar[r]
&{0}\\ {\ext_1 :}&{0}\ar[r]&{I}\ar[r]&{A}\ar[r]&{A/I}\ar[r]&{0}}
\] are full inclusions.  It follows that all of the above maps induce
isomorphisms on the $K$-groups and $\Ksix (\ext_1) \cong \Ksix
(\ext'_1)$.

In addition, since $pIp$ is nonunital and purely infinite, the ideal
$pIp$ is stable (by Zhang's dichotomy) and we may write $pIp \cong B_1
\otimes \KKK$ for a suitably chosen $B_1 \in \CCap$.  We now let $E_1
:= pAp$ and $A_1 := pAp / pIp$.  With this notation, $\ext'_1$ takes
the form
\[ \xymatrix{ {\ext_1':}&{0}\ar[r]&{B_1 \otimes
\KKK}\ar[r]&{E_1}\ar[r]&{A_1}\ar[r]&{0} }
\] with $B_1$ and $A_1$ unital $C^*$-algebras in $\CCap$.
\cbstart{}%
Furthermore, $\ext_1'$ is an essential extension because the ideal $I$
is a largest proper ideal in $A$, and thus also the ideal $pIp \cong
B_1 \otimes \KKK$ is a largest proper ideal in $A_1 \otimes \KKK$,
which implies that $pIp \cong B_1 \otimes \KKK$ is an essential ideal.
\cbend{}%

By a similar argument, we may find an extension
\[ \xymatrix{ {\ext_2':}&{0}\ar[r]&{B_2 \otimes
\KKK}\ar[r]&{E_2}\ar[r]&A_2 \ar[r]&{0}}
\] with $E_2 := qA'q$ for a full projection $q \in A'$, the
$C^*$-algebras $B_2$ and $A_2$ in $\CCap$ with $B_2 \otimes \KKK$
satisfying the corona factorization property, and $\ext_2'$ an
essential and full extension with $\Ksix (\ext_2') \cong \Ksix
(\ext_2)$.  It follows from Lemma~\ref{ERR-lem} that $E_1 \otimes \KKK
\cong E_2 \otimes \KKK$, or equivalently, that $pAp \otimes \KKK \cong
qA'q \otimes \KKK$.  Furthermore, because $pAp$ is a full corner of
$A$, and $qAq$ is a full corner of $A'$, we obtain that $pAp \otimes
\KKK \cong A \otimes \KKK$ and $qA'q \otimes \KKK \cong A' \otimes
\KKK$.  It follows that $A \otimes \KKK \cong A' \otimes \KKK$.  We
also observe that in this case the order structure on $K_0(A)$ is a
redundant part of the invariant.

\noindent\textbf{Case $\fiin$}\\ As seen in Lemma
\ref{E-structure-lem}, we may write $A=C^*(E)$ where $E$ has no
breaking vertices.  By Proposition~\ref{getcorner} there exists a
projection $p \in A$ such that $pAp$ is a full corner inside $A$, and
$pIp$ is stable.  Moreover, since $I$ is an AF-algebra by hypothesis
and $pIp$ is a hereditary subalgebra of $I$, it follows from
\cite[Theorem~3.1]{gae:admic} that $pIp$ is an AF-algebra.  Hence we
may choose a unital AF-algebra $B_1$ with $pIp \cong B_1 \otimes
\KKK$. The extension \cbstart{}%
\[ \xymatrix{ {\ext_1':}&{0}\ar[r]&{B_2 \otimes \KKK
}\ar[r]&{pAp}\ar[r]&{pAp/pIp}\ar[r] &{0}}
\] is essential.  In addition, an argument as in Cases $\inin$,
$\infi$ shows that $\Ksix (\ext_1') \cong \Ksix (\ext_1)$. 
\cbend{}%
We may
perform a similar argument for $A'$, and arguing as in Cases $\inin$,
$\infi$ and applying Lemma~\ref{ERR-lem} we obtain that $A \otimes
\KKK \cong A' \otimes \KKK$.  Again, the order structure on $K_0(A)$
is a redundant part of the invariant.

\noindent\textbf{Case $\fifi$}\\ Since $I$ and $A/I$ are AF-algebras,
it follows from a result of Brown that $A$ is an AF-algebra
\cite{bro:eaplp} (or see \cite[\S 9.9]{ege:dca} for a detailed proof).
Similarly, $A'$ is an AF-algebra.  It follows from Elliott's Theorem
that $K_0(A)$ order isomorphic to $K_0(A')$ implies that $A \otimes
\KKK \cong A' \otimes \KKK$.  Moreover, in this case the ordered group
$K_0(A)$ is a complete invariant.
\end{proof}

\begin{remark}
Joint work in progress by Ruiz and the first named author provides
information about the necessity of using order on the $K_0$-groups of
$\Ksix(-)$. There are examples of pairs of non-isomorphic stable AF-algebras $A$ and $A'$ with
exactly one ideal $I$ and $I'$ such that $\Ksix(\ext)\simeq
\Ksix(\ext')$ with group isomorphisms which are positive at $K_0(I)$
and $K_0(A/I)$ but not at $K_0(A)$. By \cite[Corollary~4.8]{kst:rafagaelua}, $A$ and $A'$ may be
realized as graph $C^*$-algebras.
In the other cases, one may prove that any isomorphism
between $\Ksix(\ext)$ and $\Ksix(\ext')$  will automatically be
positive at $K_0(A)$ if it is positive at $K_0(I)$ and $K_0(A/I)$. Thus it is possible that any isomorphism of our reduced invariant lifts to a $*$-isomorphism in the $\inin$, $\infi$, and $\fiin$ cases, but this has only been established in the $\inin$ case, cf.\  \cite{eilres}.
\end{remark}

Our proof of Theorem~\ref{clas} can be modified slightly to
give us an additional result.

\begin{theorem}\label{clasimp} If $A$ is a the $C^*$-algebra of a
graph satisfying Condition~(K), and if $A$ has a largest proper ideal
$I$ such that $I$ is an AF-algebra, then $A$ is classified up to
stable isomorphism by the six-term exact sequence
\[ \xymatrix{ {K_0(I)}\ar[r]&{K_0(A)}\ar[r]&{K_0(A/I)}\ar[d]\\
{K_1(A/I)}\ar[u]&{K_1(A)}\ar[l]&{K_1(I)}\ar[l]}
\] with $K_0(I)$ considered as an ordered group.

In other words, if $A$ is the $C^*$-algebra of a graph satisfying
Condition~(K) with a largest proper ideal $I$ that is an AF-algebra,
if $A'$ is the $C^*$-algebra of a graph satisfying Condition~(K) with
a largest proper ideal $I'$ that is an AF-algebra, and if
\[ \xymatrix{ {\ext_1:}&{0}\ar[r]& \, I \, \ar[r]&{\, A \,}\ar[r]&{\,
A/I \, }\ar[r]&{0} & }
\]
\[ \xymatrix{
{\ext_2:}&{0}\ar[r]&I'\ar[r]&{A'}\ar[r]&{A'/I'}\ar[r]&{0} & }
\] are the associated extensions, then $A \otimes \KKK \cong A'
\otimes \KKK$ if and only if $\Ksix (\ext_1) \cong \Ksix (\ext_2)$.
\end{theorem}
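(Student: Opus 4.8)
The plan is to run the proof of Theorem~\ref{clas} essentially unchanged, the one genuinely new feature being that the ideal $I$ is now an arbitrary, possibly nonsimple, AF-algebra. The forward implication is routine: a largest proper ideal is canonical, so any isomorphism $A \otimes \KKK \cong A' \otimes \KKK$ carries the largest proper ideal of the one onto that of the other and hence induces an isomorphism $\Ksix(\ext_1) \cong \Ksix(\ext_2)$ respecting all orders. For the converse I would first set up the two cases. Writing $A = C^*(E)$, Lemma~\ref{largest-g-i} gives $I = I_{(H,B_H)}$ and shows $A/I$ is simple; by \cite[Lemma~1.3]{kdjhhsw:srga} and \cite[Corollary~3.5]{bhrs:iccig} both $I$ and $A/I$ are graph algebras, so $A/I$ is AF or a Kirchberg algebra while $I$ and $I'$ are AF by hypothesis. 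The ordered isomorphism $K_0(A/I) \cong K_0(A'/I')$ supplied by $\Ksix(\ext_1) \cong \Ksix(\ext_2)$ forces, by comparing positive cones exactly as in the proof of Theorem~\ref{clas}, that $A/I$ and $A'/I'$ are of the same type. This leaves two cases, the analogues of cases $\fifi$ and $\fiin$.

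When $A/I$ is AF, both $A$ and $A'$ are extensions of an AF-algebra by an AF-algebra, hence are themselves AF by Brown's theorem \cite{bro:eaplp} (see also \cite[\S9.9]{ege:dca}). Then $\Ksix(\ext_1) \cong \Ksix(\ext_2)$ provides an ordered isomorphism $K_0(A) \cong K_0(A')$, and Elliott's theorem \cite[Theorem~4.3]{gae:cilssfa} gives $A \otimes \KKK \cong A' \otimes \KKK$. This case requires no change from case $\fifi$, since Brown's theorem makes no simplicity assumption on $I$.

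The real work is the case in which $A/I$ is a Kirchberg algebra. Unlike in Theorem~\ref{clas}, I cannot quote Lemma~\ref{E-structure-lem} to get $B_H = \emptyset$, so I would first replace $E$ by a stably isomorphic graph with no breaking vertices (for instance by desingularizing the singular vertices); this preserves the lattice of ideals together with the AF/purely-infinite dichotomy for subquotients, so $I$ remains a largest proper AF ideal with purely infinite quotient and now $B_H = \emptyset$. Proposition~\ref{getcorner} then yields a projection $p$ with $pAp$ a full corner of $A$ and $pIp$ stable; as $pIp$ is a hereditary subalgebra of the AF-algebra $I$, it is AF by \cite[Theorem~3.1]{gae:admic}, hence a stable AF-algebra. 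The quotient $pAp/pIp$ is a full corner of the simple algebra $A/I$, so it is a unital Kirchberg algebra lying in $\CCap$, and $pIp$ is essential in $pAp$ because $I$ is a largest proper ideal. As in Lemma~\ref{full-inclusions-lem}, the compression maps give $\Ksix(\ext_1) \cong \Ksix(\ext_1')$ for the corner extension $\ext_1' : 0 \to pIp \to pAp \to pAp/pIp \to 0$, and a full corner $qA'q$ of $A'$ produces an extension $\ext_2'$ with $\Ksix(\ext_2') \cong \Ksix(\ext_2)$. What remains is to classify these full, essential, unital extensions by their six-term sequences.

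The main obstacle is exactly this last step. Because $I$ is nonsimple, $pIp$ is a nonsimple stable AF-algebra, so Lemma~\ref{ERR-lem} does not apply: its ideal class is required by Definition~\ref{Class-4-props-def} to consist of simple algebras, and one cannot in general split a unital simple summand off $pIp$. I would therefore appeal to the underlying machinery of \cite{segrer:cecc} directly --- of which Lemma~\ref{ERR-lem} is the unital-simple specialization --- with the ideal taken to be an arbitrary stable AF-algebra. The legitimacy of this reduces to two points. First, AF-algebras, simple or not, are classified up to stable isomorphism by their ordered $K_0$-groups, which is Elliott's theorem. Second --- and this is where I expect the real difficulty to lie --- a separable stable AF-algebra must be shown to have the corona factorization property \cite[Theorem~5.2]{dkpwn:cfpaue}. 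This second fact is what I would use to establish the fullness of $\ext_1'$ and $\ext_2'$, via the analogue of the argument in Lemma~\ref{ERR-lem} based on \cite[Propositions~1.5 and~1.6]{segrer:cecc}, which I expect to require only stability and the corona factorization property of the ideal rather than its simplicity; it also underlies the $KK$-theoretic absorption in \cite[Theorem~3.10]{segrer:cecc}. Granting this, $\Ksix(\ext_1') \cong \Ksix(\ext_2')$ gives $pAp \otimes \KKK \cong qA'q \otimes \KKK$, and fullness of the corners upgrades this to $A \otimes \KKK \cong A' \otimes \KKK$.
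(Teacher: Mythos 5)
Your proposal follows the paper's proof in all essentials for the main case. The paper's argument is exactly your reduction: use the desingularization of \cite{ddmt:cag} to replace each algebra by the stably isomorphic $C^*$-algebra of a row-finite graph, so that there are no breaking vertices and Proposition~\ref{getcorner} applies, and then rerun Case $\fiin$ of the proof of Theorem~\ref{clas}. The one step you present as a hoped-for extension of the machinery of \cite{segrer:cecc} --- classifying full, essential, unital extensions whose ideal is an arbitrary (possibly nonsimple) stable AF-algebra --- is not something the paper re-proves: it simply invokes \cite[Theorem~3.13]{segrer:cecc} in place of \cite[Theorem~3.10]{segrer:cecc}, and that theorem is precisely the statement you ask for; your two supporting points (Elliott's theorem for nonsimple AF-algebras, and the corona factorization property for stable AF-algebras via \cite[Theorem~5.2]{dkpwn:cfpaue}) are exactly the ingredients behind it. So your identification of where the difficulty sits, and of what resolves it, is accurate, and your fullness worry is also softened by the observation that the quotient $pAp/pIp$ is simple here, so fullness of the corner extension follows just as in Lemma~\ref{ERR-lem}.

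The only genuine divergence is your separate case in which $A/I$ is AF. The paper's proof tacitly does not treat it: Proposition~\ref{getcorner} presupposes a purely infinite quotient, and \cite[Theorem~3.13]{segrer:cecc} a Kirchberg-type quotient. Your Brown--Elliott argument is the natural move there, but note that it consumes an \emph{order} isomorphism at $K_0(A)$, whereas the invariant in Theorem~\ref{clasimp} carries order only at $K_0(I)$; and by the remark following Theorem~\ref{clas} (nonisomorphic stable AF-algebras, realizable as graph algebras by \cite[Corollary~4.8]{kst:rafagaelua}, admitting $\Ksix$-isomorphisms positive at $K_0(I)$ and $K_0(A/I)$ but not at $K_0(A)$), positivity at $K_0(A)$ cannot be recovered from the reduced invariant in the stably finite setting. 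So with the invariant as literally stated your argument for that subcase does not close --- but this reflects an imprecision in the theorem's formulation rather than a defect of your approach; read with the quotient purely infinite, as the paper's own proof effectively assumes, your proposal coincides with the paper's proof.
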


\begin{proof} To begin, using the desingularization of \cite{ddmt:cag}
\cbstart{}%
we may find a row-finite graph $F$ such that $C^*(F)$ is stably
\cbend{}%
isomorphic to $A$.  Since $C^*(F)$ is Morita equivalent to $A$, the
$C^*$-algebra $C^*(F)$ has a largest proper ideal that is an
AF-algebra, and the associated six-term exact sequence of $K$-groups
is isomorphic to $\Ksix (\ext_1)$.  Hence we may replace $A$ by
$C^*(F)$ for the purposes of the proof.  Likewise for $A'$.  Thus we
may, without loss of generality, assume that $A$ and $A'$ are
$C^*$-algebras of row-finite graphs, and in particular that $A$ and
$A'$ are $C^*$-algebras of graphs with no breaking vertices.  To
obtain the result, we simply argue as in Case $\fiin$ of the proof of
Theorem~\ref{clas}, using \cite[Theorem~3.13]{segrer:cecc} in place of
\cite[Theorem~3.10]{segrer:cecc}, and noting that
Proposition~\ref{getcorner} applies since the graphs have no breaking
vertices.
\end{proof}

\section{Examples} \label{ex-sec}

To illustrate our methods we give a complete classification, up to
stable isomorphism, of all $C^*$-algebras of graphs with two vertices
that have precisely one proper nontrivial ideal.  Combined with other
results, this allows us to give a complete classification of all
$C^*$-algebras of graphs satisfying Condition~(K) with exactly two
vertices.

\cbstart{}%
If $E$ is a graph with two vertices, and if $C^*(E)$ has exactly one
\cbend{}%
proper ideal, then $E$ must have exactly one proper nonempty saturated
hereditary subset with no breaking vertices.  This occurs precisely
when the vertex matrix of $E$ has the form
\[
\begin{bmatrix}a&b\\0&d\end{bmatrix}
\] where $a,d\in\{0,2,3,\dots,\infty\}$ and
$b\in\{1,2,3,\dots,\infty\}$ with the extra conditions
\[ a=0 \ \Longrightarrow \ b = \infty \qquad \text{ and } \qquad b =
\infty \implies (a= 0 \text{ or } a = \infty),
\] Computing $K$-groups using \cite{ddmt:ckegc}, we see that in all of
these cases the $K_1$-groups of $C^*(E)$, the unique proper nontrivial
ideal $I$, and the quotient $C^*(E)/I$ all vanish.  Thus the six-term
exact sequence becomes $0 \to K_0(I) \to K_0(C^*(E)) \to K_0(C^*(E)/I)
\to 0$, and using \cite{ddmt:ckegc} to compute the $K_0$-groups and
the induced maps we obtain the following cases.

$ $

\begin{center}
\begin{tabular}{|c|c|c||c|c|}\hline $a$&$d$&$b$&$\ \ K_0(I) \to
K_0(C^*(E)) \to K_0(C^*(E)/I) \ \ $&Case\\\hline\hline
0&0&$\infty$&\rule[-3mm]{0mm}{8mm}$\ZZ\to\Z \oplus \Z
\to\ZZ$&$\fifi$\\ \cline{2-5}
0&$n$&$\infty$&\rule[-3mm]{0mm}{8mm}$\Z_{d-1}\to\Z_{d-1}\oplus
\Z\to\ZZ$&$\infi$\\ \cline{2-5}
0&$\infty$&$\infty$&\rule[-3mm]{0mm}{8mm}$\ZZp\to\Z \oplus \Z
\to\ZZ$&$\infi$\\ \hline $n$&0&$1,n$&\rule[-3mm]{0mm}{8mm}$\ZZ\to
\operatorname{coker}(\smalltwobyone{b}{a-1}) \to
\Z_{a-1}$&$\fiin$\\\cline{2-5}
$n$&$n$&$1,n$&\rule[-3mm]{0mm}{8mm}$\Z_{d-1}\to
\operatorname{coker}(\smalltwobytwo{d-1}{b}{0}{a-1})\to
\Z_{a-1}$&$\inin$\\\cline{2-5}
$n$&$\infty$&$1,n$&\rule[-3mm]{0mm}{8mm}$\ZZp\to
\operatorname{coker}(\smalltwobyone{b}{a-1}) \to
\Z_{a-1}$&$\inin$\\\hline
$\infty$&0&$1,n,\infty$&\rule[-3mm]{0mm}{8mm}$\ZZ\to\Z \oplus \Z
\to\ZZp$&$\fiin$\\\cline{2-5}
$\infty$&$n$&$1,n,\infty$&\rule[-3mm]{0mm}{8mm}$\Z_{d-1}\to\Z_{d-1}\oplus\Z
\to\ZZp$&$\inin$\\\cline{2-5}
$\infty$&$\infty$&$1,n,\infty$&\rule[-3mm]{0mm}{8mm}$\ZZp\to\Z \oplus
\Z \to\ZZp$&$\inin$\\\hline
\end{tabular}
\end{center}

$ $

\noindent where ``$n$'' indicates an integer $\geq 2$, ``$\ZZ$''
indicates a copy of $\Z$ ordered with $\Z_+=\N$ and ``$\ZZp$"
indicates a copy of $\Z$ ordered with $\Z_+=\Z$.  In addition, in all
cases we have written the middle group in such a way that the map from
$K_0(I)$ to $K_0(C^*(E))$ is $[x] \mapsto [(x,0)]$, and the map from
$K_0(C^*(E))$ to $K_0(C^*(E)/I)$ is $[(x,y)] \mapsto [y]$.  Note that
in all but the first case, the order structure of the middle
$K_0$-groups is irrelevant and need not be computed.

\begin{theorem}\label{classthmtbt}\cbstart{}%
Let $E$ and $E'$ be graphs each with two vertices such that $C^*(E)$ and $C^*(E')$ each have exactly
one proper nontrivial ideal, and write the vertex matrix of $E$ as
\cbend{}%
$\smalltwobytwo{a}{b}{0}{d}$ and the vertex matrix of $E'$ as
$\smalltwobytwo{a'}{b'}{0}{d'}$. Then
\[ C^*(E)\otimes\KKK \cong C^*(E')\otimes\KKK
\] if and only if the following three conditions hold:
\begin{enumerate}
\item $a=a'$
\item $d=d'$
\item If $a\in\{2,\dots\}$ then
\begin{enumerate}
\item[(a)] If $d\in\{0,\infty\}$ then $[b] = [z] [b']$ in $\Z_{a-1}$
for a unit $[z] \in \Z_{a-1}$
\item[(b)] If $d\in\{2,\dots\}$ then $[z_1][b] = [z_2][b']$ in
$\Z_{\gcd{(a-1, d-1)}}$ for a unit $[z_1] \in \Z_{d-1}$ and a unit
$[z_2] \in \Z_{a-1}$.
\end{enumerate}
\end{enumerate}
\end{theorem}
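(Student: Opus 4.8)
The plan is to run everything through Theorem~\ref{clas} and its \emph{Moreover} part, which reduces the assertion to a purely algebraic question. Since $C^*(E)$ and $C^*(E')$ each have exactly one proper nontrivial ideal, Theorem~\ref{clas} gives $C^*(E)\otimes\KKK\cong C^*(E')\otimes\KKK$ if and only if $\Ksix(\ext_1)\cong\Ksix(\ext_2)$; and in cases $\fiin$, $\infi$, $\inin$ the \emph{Moreover} part lets me drop the order on the middle $K_0$-group, so that the invariant is a short exact sequence with order only on the two ends and a bare group isomorphism in the middle. As noted before the statement, all three $K_1$-groups vanish, so the invariant to match is the short exact sequence
\[
0 \to K_0(I) \to K_0(C^*(E)) \to K_0(C^*(E)/I) \to 0
\]
together with the order on $K_0(I)$ and $K_0(C^*(E)/I)$, and I must decide when two such sequences are isomorphic. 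The only case where the middle order is not redundant is $\fifi$, which forces $a=d=0$, $b=\infty$; there the graph is unique, so that case is trivial and I set it aside.

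First I would read the outer terms off the table. As an ordered group, $K_0(I)$ is $(\Z,\N)$ when $d=0$, the finite cyclic group $\Z_{d-1}$ when $d$ is a finite integer $\geq 2$, and $(\Z,\Z)$ when $d=\infty$; these three possibilities are mutually distinguishable (finite versus infinite, and the two infinite ones have different positive cones), so they determine $d$, and an order isomorphism $K_0(I)\cong K_0(I')$ exists exactly when $d=d'$. The identical analysis of $K_0(C^*(E)/I)$ in terms of $a$ shows that an order isomorphism of quotients exists exactly when $a=a'$. This is precisely conditions (1) and (2), and in particular they are necessary for any isomorphism of the sequences.

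Assuming now $a=a'$ and $d=d'$, the remaining task is to decide when the two extensions, as classes in $\mathrm{Ext}^1\!\big(K_0(C^*(E)/I),K_0(I)\big)$, lie in one orbit of the group of order-preserving automorphisms of the two ends. When $a\in\{0,\infty\}$ the quotient is free, this Ext-group vanishes, both sequences split, and $\beta:=\alpha\oplus\gamma$ gives the isomorphism; correspondingly condition (3) is vacuous. When $a$ is a finite integer $\geq 2$ the quotient is $\Z_{a-1}$, and I would compute the class from the cokernel presentation in the table: lifting a generator of $\Z_{a-1}$ to $(0,1)$ in the middle group and pushing the relation $(a-1)\cdot 1=0$ through the tabulated relations identifies the class with $\pm[b]$. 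If $d\in\{0,\infty\}$ then $K_0(I)=\Z$ and $\mathrm{Ext}^1(\Z_{a-1},\Z)\cong\Z_{a-1}$; its orbits under the order-automorphisms of the ends (the identity on $(\Z,\N)$ when $d=0$, the group $\{\pm1\}$ on $(\Z,\Z)$ when $d=\infty$, together with the units of $\Z_{a-1}$ acting on the quotient) are exactly multiplication of $[b]$ by a unit of $\Z_{a-1}$, the sign from $d=\infty$ being absorbed into the units; this is condition (3a). If instead $d$ is a finite integer $\geq 2$ then $\mathrm{Ext}^1(\Z_{a-1},\Z_{d-1})\cong\Z_{\gcd(a-1,d-1)}$, and the automorphism groups of the two ends act through the units of $\Z_{d-1}$ and of $\Z_{a-1}$ respectively, so the two classes $\pm[b],\pm[b']$ are in one orbit exactly when $[z_1][b]=[z_2][b']$ in $\Z_{\gcd(a-1,d-1)}$ for units $[z_1]\in\Z_{d-1}$, $[z_2]\in\Z_{a-1}$; this is condition (3b).

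The step I expect to be most delicate is the last one: pinning down the extension class as $\pm[b]$ in the correct Ext-group and checking that the order-preserving automorphisms of the two ends act as multiplication by the units of $\Z_{d-1}$ and $\Z_{a-1}$, in particular that the extra $\pm1$ coming from $\aut(\Z,\Z)$ in the $d=\infty$ case does not enlarge the orbit beyond the units. This requires care with the push-forward/pull-back description of the $\aut\times\aut$-action on $\mathrm{Ext}^1$ and with the normalization of the maps fixed in the table (the map $[x]\mapsto[(x,0)]$ into the cokernel and $[(x,y)]\mapsto[y]$ out of it). Once the class and the action are correctly identified, conditions (3a) and (3b) fall out as the statements that the two classes share an orbit, completing the characterization.
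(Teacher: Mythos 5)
Your proposal is correct, and it reaches the paper's conclusion by a genuinely different route in the algebraic core. Both you and the paper reduce everything to Theorem~\ref{clas} (with the \emph{Moreover} part discarding the middle order outside case $\fifi$), and both read conditions (1) and (2) off the ordered outer terms of the table, since $(\Z,\N)$, the finite cyclic groups, and $(\Z,\Z)$ are mutually distinguishable. But where the paper then argues by hand --- in the necessity direction showing that any ladder isomorphism $\beta$ of the middle cokernels is left multiplication by a matrix of the form $\smalltwobytwo{\pm 1}{y}{0}{z}$ or $\smalltwobytwo{z_1}{y}{0}{z_2}$ and extracting the congruences from $\beta[(b,a-1)]=[(0,0)]$, and in the sufficiency direction constructing such matrices explicitly and invoking the five lemma --- you classify the short exact sequences by Baer theory: the tabulated extension has class $\pm[b]$ in $\operatorname{Ext}^1(\Z_{a-1},K_0(I))$, the order automorphisms of the ends (trivial on $(\Z,\N)$, $\{\pm 1\}$ on $(\Z,\Z)$, units on cyclic groups) act by push-forward and pull-back, i.e.\ by unit multiplication on $\Z_{a-1}\cong\operatorname{Ext}^1(\Z_{a-1},\Z)$ resp.\ $\Z_{\gcd(a-1,d-1)}\cong\operatorname{Ext}^1(\Z_{a-1},\Z_{d-1})$, and isomorphism of the sequences is exactly coincidence of orbits, with all signs absorbed because $-1$ is a unit. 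Your computations check out: lifting a generator of $\Z_{a-1}$ to $[(0,1)]$ and reducing $(0,a-1)$ against the relation column $(b,a-1)$ does identify the class with $-[b]$, which matches (3a) and (3b); your dispatch of $a\in\{0,\infty\}$ via vanishing of $\operatorname{Ext}^1(\Z,-)$ agrees with the paper's observation that the tabulated sequences there do not depend on $b$; and your isolation of the $\fifi$ row by uniqueness of the graph (since $a=0$ forces $b=\infty$) is legitimate and in fact makes explicit what the paper's identity-map argument uses silently, namely that the two graphs in that row coincide so even the ordered middle groups agree. What the paper's hands-on approach buys is completely explicit isomorphisms and independence from $\operatorname{Ext}$-functoriality conventions; what yours buys is a conceptual explanation of why precisely the unit-orbit conditions (3a) and (3b) appear, and an argument that would carry over unchanged to other pairs of end groups --- at the cost of the care you correctly flag regarding the normalization of the class and the push-pull description of the $\aut\times\aut$ action.
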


\begin{proof} Suppose $C^*(E)\otimes\KKK \cong C^*(E')\otimes\KKK$.
Then $K_0(I) \cong K_0(I')$ as ordered groups and $K_0(C^*(E)/I) \cong
K_0(C^*(E')/I')$ as ordered groups.  From a consideration of the
invariants in the above table, this implies that $a = a'$, $d = d'$,
and the invariants for $C^*(E)$ and $C^*(E')$ both fall into the same
case (i.e.~the same row) of the table.  Thus we need only consider the
two cases described in (3)(a) and (3)(b).

$ $

\cbstart{}%
\noindent \textsc{Case i:} $a \in \{2, \ldots \}$ and $d \in \{ 0,
\infty \}$.
\cbend{}%

In this case there are isomorphisms $\alpha$, $\beta$, and $\gamma$
such that
\[ \xymatrix{ 0 \ar[r] & \Z \ar[r] \ar[d]_\alpha & \coker
(\smalltwobyone{b}{a-1}) \ar[r] \ar[d]_\beta & \Z_{a-1} \ar[r]
\ar[d]_\gamma & 0 \\ 0 \ar[r] & \Z \ar[r] & \coker
(\smalltwobyone{b'}{a-1}) \ar[r] & \Z_{a-1} \ar[r] & 0}
\] commutes.  Since the only automorphisms on $\Z$ are $\pm
\operatorname{Id}$, we have that $\alpha(x) = \pm x$.  Also, since the
only automorphisms on $\Z_{a-1}$ are multiplication by a unit, $\gamma
([x]) = [z] [x]$ for some unit $[z] \in \Z_{a-1}$.  By the
commutativity of the left square $\beta([1,0]) = [(\pm1, 0)]$.  Also,
by the commutativity of the right square, $\beta([0,1]) = ([y, z])$
for some $y \in \Z$.  It follows from the $\Z$-linearity of $\beta$
that $\beta[(r,s)] = [(\pm r + sy, sz)]$, so $\beta$ is equal to left
multiplication by the matrix $\smalltwobytwo{\pm 1}{y}{0}{z}$.  We
must have $\beta [(b, a-1)] = [(0,0)]$, and thus $[(\pm b + (a-1)y,
(a-1)z)] = [(0,0)]$ in $\coker (\smalltwobyone{b'}{a-1})$.  Hence $\pm
b + (a-1)y = b't$ and $(a-1)z = (a-1)t$ for some $t \in \Z$.  It
follows that $z = t$ and $\pm b + (a-1)y = b'z$, so $b \equiv \pm z
\mod (a-1)$.  Since $[\pm z]$ is a unit for $\Z_{a-1}$ it follows that
$[b] = [z] [b']$ in $\Z_{a-1}$ for a unit $[z] \in \Z_{a-1}$.  Thus
the condition in (a) holds.

$ $

\cbstart{}%
\noindent \textsc{Case ii:} $a \in \{2, \ldots \}$ and $d \in \{2,
\ldots \}$.
\cbend{}%
In this case there are isomorphisms $\alpha$, $\beta$,
and $\gamma$ such that
\[ \xymatrix{ 0 \ar[r] & \Z_{d-1} \ar[r] \ar[d]_\alpha & \coker
(\smalltwobytwo{d-1}{b}{0}{a-1}) \ar[r] \ar[d]_\beta & \Z_{a-1} \ar[r]
\ar[d]_\gamma & 0 \\ 0 \ar[r] & \Z_{d-1} \ar[r] & \coker
(\smalltwobytwo{d-1}{b'}{0}{a-1}) \ar[r] & \Z_{a-1} \ar[r] & 0}
\] commutes.  Since the only automorphisms on $\Z_{d-1}$ are
multiplication by a unit, we have that $\alpha([x]) = [z_1] [x]$ for
some unit $[z_1] \in \Z_{d-1}$.  Likewise, $\gamma([x]) = [z_2][x]$
for some unit $[z_2] \in \Z_{a-1}$.  By the commutativity of the left
square $\beta([1,0]) = [(z_1, 0)]$.  Also, by the commutativity of the
right square, $\beta([0,1]) = ([y, z_2])$ for some $y \in \Z$.  It
follows from the $\Z$-linearity of $\beta$ that $\beta[(r,s)] = [(z_1r
+ ys, z_2s)]$, so $\beta$ is equal to left multiplication by the
matrix $\smalltwobytwo{z_1}{y}{0}{z_2}$.  Since
$\smalltwobytwo{d-1}{b}{0}{a-1} \smalltwobyone{0}{1} =
\smalltwobyone{b}{a-1}$, we must have $\beta [(b, a-1)] = [(0,0)]$,
and thus $[(z_1 b + y(a-1), z_2(a-1))] = [(0,0)]$ in $\coker
(\smalltwobytwo{d-1}{b'}{0}{a-1})$.  Hence $z_1 b + y(a-1) = (d-1)s +
b't$ and $z_2(a-1) = (a-1)t$ for some $s, t \in \Z$.  It follows that
$z_2 = t$ and $z_1 b + y(a-1) = (d-1)s + b'z_2$.  Writing $(d-1)s -
y(a-1) = k \gcd{(a-1, d-1)}$ we obtain $z_1b - z_2b' = k \gcd{(a-1,
d-1)}$ so that $z_1 b \equiv z_2 b' \mod \gcd{(a-1, d-1)}$ and
$[z_1][b] = [z_2][b']$ in $\Z_{\gcd{(a-1, d-1)}}$.  Thus the condition
in (b) holds.

$ $

For the converse, we assume that the conditions in (1)--(3) hold.
Consider the following three cases.

$ $

\noindent \textsc{Case I:} $a = 0$ or $a = \infty$.  In this case, by
considering the invariants listed in the above table, we see that we
may use the identity maps for the three vertical isomorphisms to
obtain a commutative diagram.  Thus the six-term exact sequences are
\cbstart{}%
isomorphic, and it follows from Theorem~\ref{clas} that
\cbend{}%
$C^*(E)\otimes\KKK \cong C^*(E')\otimes\KKK$.

$ $

\noindent \textsc{Case II:} $a \in \{2, \ldots \}$ and $[b] = [z]
[b']$ in $\Z_{a-1}$ for a unit $[z] \in \Z_{a-1}$.  Then $b \cong zb'
\mod (a-1)$.  Hence $zb' - b = (a-1)y$ for some $y \in \Z$.  Consider
$\smalltwobytwo{1}{y}{0}{z} : \Z \oplus \Z \to \Z \oplus \Z$.  It is
straightforward to check that this matrix takes $\im
\smalltwobyone{b}{a-1}$ into $\im \smalltwobyone{b'}{a-1}$.  Thus
multiplication by this matrix induces a map $\beta : \coker
(\smalltwobyone{b}{a-1}) \to \coker (\smalltwobyone{b'}{a-1})$.  In
addition, if we let $\alpha = \operatorname{Id}$ and let $\gamma$ be
multiplication by $[z]$, then it is straightforward to verify that the
diagram
\[ \xymatrix{ 0 \ar[r] & \Z \ar[r] \ar[d]_\alpha & \coker
(\smalltwobyone{b}{a-1}) \ar[r] \ar[d]_\beta & \Z_{a-1} \ar[r]
\ar[d]_\gamma & 0 \\ 0 \ar[r] & \Z \ar[r] & \coker
(\smalltwobyone{b'}{a-1}) \ar[r] & \Z_{a-1} \ar[r] & 0}
\] commutes.  Since $\alpha$ and $\gamma$ are isomorphisms, an
application of the five lemma implies that $\beta$ is an isomorphism.
It follows from Theorem~\ref{clas} that $C^*(E)\otimes\KKK \cong
C^*(E')\otimes\KKK$.

$ $

\noindent \textsc{Case III:} Suppose that $[z_1][b] = [z_2][b']$ in
$\Z_{\gcd{(a-1, d-1)}}$ for a unit $[z_1] \in \Z_{d-1}$ and a unit
$[z_2] \in \Z_{a-1}$.  Then $z_1b - z_2b' = k \gcd{(a-1, d-1)}$ for
some $k \in \Z$.  Furthermore, we may write $k \gcd{(a-1, d-1)} =
s(d-1) -y(a-1)$ for some $s,y \in \Z$.  Consider
$\smalltwobytwo{z_1}{y}{0}{z_2} : \Z \oplus \Z \to \Z \oplus \Z$.  It
is straightforward to check that this matrix takes $\im
\smalltwobytwo{d-1}{b}{0}{a-1}$ into $\im
\smalltwobytwo{d-1}{b'}{0}{a-1}$.  Thus multiplication by this matrix
induces a map $\beta : \coker (\smalltwobytwo{d-1}{b}{0}{a-1}) \to
\coker (\smalltwobyone{d-1}{b'}{0}{a-1})$.  In addition, if we let
$\alpha$ be multiplication by $[z_1]$ and and let $\gamma$ be
multiplication by $[z_2]$, then it is straightforward to verify that
the diagram
\[ \xymatrix{ 0 \ar[r] & \Z \ar[r] \ar[d]_\alpha & \coker
(\smalltwobytwo{d-1}{b}{0}{a-1}) \ar[r] \ar[d]_\beta & \Z_{a-1} \ar[r]
\ar[d]_\gamma & 0 \\ 0 \ar[r] & \Z \ar[r] & \coker
(\smalltwobytwo{d-1}{b'}{0}{a-1}) \ar[r] & \Z_{a-1} \ar[r] & 0}
\] commutes.  Since $\alpha$ and $\gamma$ are isomorphisms, an
application of the five lemma implies that $\beta$ is an isomorphism.
It follows from Theorem~\ref{clas} that $C^*(E)\otimes\KKK \cong
C^*(E')\otimes\KKK$.
\end{proof}

\begin{example}\label{fourex} Consider the three graphs

$ $

\[ \xymatrix{ \ar@(u, ur)[] \ar@(u, ul)[] \ar@(d, dr)[] \ar@(d, dl)[]
\bullet \ar[rr] & & \bullet} \qquad \qquad \xymatrix{\ar@(u, ur)[]
\ar@(u, ul)[] \ar@(d, dr)[] \ar@(d, dl)[] \bullet \ar@/^/[rr]
\ar@/_/[rr] & & \bullet} \qquad \qquad \xymatrix{\ar@(u, ur)[] \ar@(u,
ul)[] \ar@(d, dr)[] \ar@(d, dl)[] \bullet \ar[rr] \ar@/^0.9pc/[rr]
\ar@/_0.9pc/[rr] & & \bullet }
\]

$ $

$ $

\noindent which all have graph $C^*$-algebras with precisely one
proper nontrivial ideal. By Theorem~\ref{classthmtbt} the
$C^*$-algebras of the two first graphs are stably isomorphic to each
other, but not to the $C^*$-algebra of the third graph. 
\end{example}

\begin{remark}
We mention that with existing technology it would be very difficult to see directly that the $C^*$-algebras of the two first graphs in Example~\ref{fourex} are stably isomorphic.  One approach would be to form the \emph{stabilized graphs} (see \cite[\S4]{tom:socatg}) and then attempt to transform one graph to the other through operations that preserve the stable isomorphism class of the associated $C^*$-algebra (e.g., in/outsplittings, delays).  However, even in this concrete example it is unclear what sequence of operations would accomplish this and we speculate that it would not be possible using the types of operations mentioned above and their inverses.  In addition, the second author's Ph.D. thesis (see \cite{Tom-thesis} and the resulting papers \cite{rtw:ctcgws}, \cite{tom:ecefcgws}, and \cite{tom:cefga}) deals with extensions of graph $C^*$-algebras and shows that under certain circumstances two essential one-sink extensions of a fixed graph $G$ have stably isomorphic $C^*$-algebras if they determine the same class in $\operatorname{Ext} C^*(G)$ \cite[Theorem~4.1]{tom:ecefcgws}.  In Example~\ref{fourex}, the three displayed graphs are all essential one-sink extensions of the graph with one vertex and four edges, whose $C^*$-algebra is $\mathcal{O}_4$.  We also have that $\operatorname{Ext} \mathcal{O}_4 \cong \Z_3$, and the first two graphs in Example~\ref{fourex} determine the classes $[1]$ and $[2]$ in $\Z_3$, respectively.  Consequently, we cannot apply \cite[Theorem~4.1]{tom:ecefcgws}, and we see that the methods of this paper have applications to situations not covered by \cite[Theorem~4.1]{tom:ecefcgws}. (As an aside, we mention that the second author has conjectured that if $G$ is a finite graph with no sinks or sources, if $C^*(G)$ is simple, and if $E_1$ and $E_2$ are one-sink extensions of $G$, then $C^*(E_1)$ is stably isomorphic to $C^*(E_2)$ if and only if there exists an automorphism on $\operatorname{Ext} C^*(G)$ taking the class of the extension determined by $E_1$ to the class of the extension determined by $E_2$.  We see that Example~\ref{fourex} is consistent with this conjecture since there is an automorphism of $\Z_3$ taking $[1]$ to $[2]$.)
\end{remark}

$ $

Using the Kirchberg-Phillips Classification Theorem and our results in
Theorem~\ref{classthmtbt} we are able to give a complete
classification of the stable isomorphism classes of $C^*$-algebras of
graphs satisfying Condition~(K) with exactly two vertices.  We state
\cbstart{}%
this result in the following theorem.  As one can see, there are a
\cbend{}%
variety of cases and possible ideal structures for these stable
isomorphism classes.

\begin{theorem}\label{class-two-vertices} Let $E$ and $E'$ be graphs
satisfying Condition~(K) that each have exactly two vertices.  Let
$A_E$ and $A_{E'}$ be the vertex matrices of $E$ and $E'$,
respectively, and order the vertices of each so that $c \leq b$ and
$c' \leq b'$.  Then $C^*(E) \otimes \KKK \cong C^*(E') \otimes \KKK$
if and only if one of the following five cases occurs.
\begin{itemize}
\item[(i)] $A_E = \smalltwobytwo{a}{b}{c}{d}$ and $A_E =
\smalltwobytwo{a'}{b'}{c'}{d'}$ with
$$\quad (b \neq 0 \text{ and } c \neq 0) \quad \text{ or } \quad (a = 0, 0 < b < \infty, c= 0, \text{and } d \geq 2)$$
and
$$\qquad (b' \neq 0 \text{ and } c' \neq 0) \quad  \text{ or } \quad (a' = 0, 0 < b' < \infty, c'= 0, \text{and } d' \geq 2)$$
and if $B_E$ is the $E^0 \times E^0_\textnormal{reg}$ submatrix of
$A_E^t-I$ and $B_{E'}$ is the $(E')^0 \times (E')^0_\textnormal{reg}$
submatrix of $A_{E'}^t-I$, then $$\coker (B_E :
\Z^{E^0_\textnormal{reg}} \to Z^{E^0}) \cong \coker (B_{E'} :
\Z^{(E')^0_\textnormal{reg}} \to Z^{(E')^0})$$ and $$\ker (B_E :
\Z^{E^0_\textnormal{reg}} \to Z^{E^0}) \cong \ker (B_{E'} :
\Z^{(E')^0_\textnormal{reg}} \to Z^{(E')^0}).$$ In this case $C^*(E)$
and $C^*(E')$ are purely infinite and simple.

\item[(ii)] $A_E = \smalltwobytwo{0}{b}{0}{0}$ and $A_{E'} =
\smalltwobytwo{0}{b'}{0}{0}$ with $0 < b < \infty$ and $0 < b' <
\infty$.  In this case $C^*(E) \cong M_{b+1}(\C)$ and $C^*(E') \cong
M_{b'+1}(\C)$, so that both $C^*$-algebras are simple and
finite-dimensional.

\item[(iii)] $A_E = \smalltwobytwo{a}{b}{0}{d}$ and $A_{E'} =
\smalltwobytwo{a'}{b'}{0}{d'}$ with $b \neq 0$ and $b' \neq 0$,
\begin{center} $a=0 \ \Longrightarrow \ b = \infty \qquad \text{ and }
\qquad b = \infty \implies (a= 0 \text{ or } a = \infty),$ \end{center}
and
\begin{center} $a'=0 \ \Longrightarrow \ b' = \infty \qquad \text{ and }
\qquad b' = \infty \implies (a'= 0 \text{ or } a' = \infty),$ \end{center}
and the conditions (1)--(3) of Theorem~\ref{classthmtbt} hold.
In this case $C^*(E)$ and $C^*(E')$ each have exactly
one proper nontrivial ideal and have ideal structure of the form
\[ \xymatrix{ A \ar@{-}[d] \\ \ar@{-}[d] I \\ \ \{ 0 \}. }
\]

\item[(iv)] $A_E = \smalltwobytwo{a}{\infty}{0}{d}$ and $A_{E'} =
\smalltwobytwo{a'}{\infty}{0}{d'}$ with $a \in \{ 2, 3, \ldots \}$ and $a' \in \{2, 3, \ldots \}$,
and with $a = a'$ and $d = d'$.  In this case $C^*(E)$ and $C^*(E')$
each have exactly two proper nontrivial ideals and have ideal
structure of the form
\[ \xymatrix{ A \ar@{-}[d] \\ \ar@{-}[d] I \\ \ar@{-}[d] J \\ \ \{ 0
\}. }
\]

\item[(v)] $A_E = \smalltwobytwo{a}{0}{0}{d}$ and $A_{E'} =
\smalltwobytwo{a'}{0}{0}{d'}$ with $$(a = a' \text{ and } d = d')
\quad \text{ or } \quad (a = d' \text{ and } d = a').$$ In this case
$C^*(E) \cong C^*(E') \cong I \oplus J$, where $I := \begin{cases}
\mathcal{O}_a & \text{ if $a \geq 2$} \\ \C & \text{ if $a
=0$} \end{cases}$ and $J := \begin{cases} \mathcal{O}_d & \text{ if $d
\geq 2$} \\ \C & \text{ if $d =0$} \end{cases}$, and each
$C^*$-algebra has exactly two proper nontrivial ideals and ideal
structure of the form
\[ \xymatrix{ & A \ar@{-}[dr] \ar@{-}[dl] & \\ I \ar@{-}[dr] & & J
\ar@{-}[dl] \\ & \ \{ 0 \}. & }
\]
\end{itemize}
\end{theorem}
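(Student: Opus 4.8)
The plan is to prove Theorem~\ref{class-two-vertices} by exhaustively analyzing the $C^*$-algebras of two-vertex graphs satisfying Condition~(K), organizing them according to the shape of the vertex matrix $A_E = \smalltwobytwo{a}{b}{c}{d}$ with $c \leq b$. The overall principle is that Condition~(K) forces a rigid structure on the diagonal entries (each vertex is the base point of either no cycles or at least two simple cycles), and the ideal lattice of $C^*(E)$ is completely determined by the pattern of zero and nonzero entries together with the reachability relation among the two vertices. I would first classify the possible ideal structures, then invoke the appropriate classification theorem in each stratum: the Kirchberg--Phillips theorem in the simple purely infinite case, elementary finite-dimensional reasoning in the simple finite case, Theorem~\ref{classthmtbt} in the one-ideal case, and a direct decomposition argument for the reducible cases.

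\textbf{Stratification by ideal structure.} First I would determine, for each sign pattern of $A_E$, how many proper nontrivial ideals $C^*(E)$ has and what the lattice looks like. The key tool is the bijection between ideals and saturated hereditary subsets (valid here since Condition~(K) holds, by \cite[Theorem~2.24]{mt:sgcg}) together with the fact that there are no breaking vertices in interesting configurations. The strongly connected case $b \neq 0$ and $c \neq 0$ (together with the exceptional configuration $a=0$, $0<b<\infty$, $c=0$, $d \geq 2$ that collapses the two vertices into one simple algebra) yields a simple algebra; an upper-triangular matrix with $b \neq 0$ and $c = 0$ gives a single proper nontrivial ideal, landing us in case (iii); the configurations with $\infty$ in the off-diagonal produce an extra ideal coming from a breaking vertex, yielding case (iv); and the fully diagonal case $b = c = 0$ disconnects the graph into a direct sum, case (v). The exceptional degenerate matrix $\smalltwobytwo{0}{b}{0}{0}$ with $0 < b < \infty$ gives $M_{b+1}(\C)$, case (ii). I would verify each of these claims by explicitly computing saturated hereditary subsets and applying \cite[Corollary~3.5]{bhrs:iccig} to identify the quotients and \cite[Remark~2.16]{ddmt:cag} to classify the simple subquotients as AF or purely infinite.

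\textbf{Classification within each stratum.} Having identified the strata, the classification statements follow from invoking the right theorem. In case (i) both algebras are simple purely infinite (one checks Condition~(K) plus strong connectivity forces this via \cite[Remark~2.16]{ddmt:cag}), so the Kirchberg--Phillips theorem reduces stable isomorphism to isomorphism of $K_0$ and $K_1$, which I would compute as the cokernel and kernel of the map $B_E$ determined by $A_E^t - I$ using \cite{ddmt:ckegc}. Case (ii) is immediate from the explicit identification with matrix algebras. Case (iii) is exactly the content of Theorem~\ref{classthmtbt}, so nothing new is needed beyond matching the hypotheses. For case (iv) with an $\infty$ off-diagonal entry, the extra ideal means the six-term sequence no longer suffices directly; I would instead observe that the invariant degenerates so that the matrix entry $b = \infty$ contributes no torsion, whence $C^*(E) \otimes \KKK$ depends only on $a$ and $d$, which is why the condition reduces to $a = a'$ and $d = d'$. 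Case (v) is a direct sum $I \oplus J$ of one-vertex algebras, and stable isomorphism of direct sums of simple algebras reduces to matching the summands up to reordering, giving the stated $(a,d) = (a',d')$ or $(a,d) = (d',a')$ alternative.

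\textbf{Anticipated main obstacle.} The hard part will be the bookkeeping in case (i): correctly enumerating which sign patterns (including the subtle degenerate configuration $a = 0$, $0 < b < \infty$, $c = 0$, $d \geq 2$ where a source-like vertex gets absorbed) yield a \emph{simple} algebra, and then computing $K_0 = \coker B_E$ and $K_1 = \ker B_E$ accurately for all the boundary values where entries are $0$ or $\infty$. The $\infty$ entries require care because the vertex matrix then has entries in $\{0,1,\dots\} \cup \{\infty\}$ and the relevant submatrix $B_E$ is taken only over the regular vertices $E^0_\textnormal{reg}$, so an infinite-emitter vertex is excluded from the domain; keeping the domain and codomain straight while matching against the finitely many explicit cases is where errors are most likely to creep in. I expect the cleanest route is to tabulate all admissible sign patterns once, cross-reference each against the five listed cases, and confirm completeness by checking that every Condition~(K) two-vertex matrix falls into exactly one stratum.
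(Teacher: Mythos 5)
Your proposal matches the paper's own route: the paper states this theorem without a separate proof, deriving it exactly as you propose --- the Kirchberg--Phillips theorem for the simple purely infinite stratum (i) with $K_0 = \coker B_E$ and $K_1 = \ker B_E$ computed via \cite{ddmt:ckegc}, the elementary identification with $M_{b+1}(\C)$ in (ii), Theorem~\ref{classthmtbt} in the one-ideal stratum (iii), and direct ideal-lattice and direct-sum arguments in (iv) and (v), with the stratification governed by saturated hereditary subsets and breaking vertices just as you describe. The proposal is correct and essentially identical to the paper's intended argument, including the subtle points (the breaking vertex occurring exactly when $2 \leq a < \infty$ and $b = \infty$, and the exceptional simple configuration $a = 0$, $0 < b < \infty$, $c = 0$, $d \geq 2$).
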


\begin{remark} We are not able to classify $C^*$-algebras of graphs
with exactly two vertices that do not satisfy Condition~(K).  For
example if $E$ and $E'$ are graphs with vertex matrices $A_E =
\smalltwobytwo{1}{b}{0}{1}$ and $A_{E'} =
\smalltwobytwo{1}{b'}{0}{1}$, then $C^*(E)$ and $C^*(E')$ each have
uncountably many ideals, and are extensions of $C(\T)$ by $C(\T
\otimes \KKK)$.  Using existing techniques, it is unclear when
$C^*(E)$ and $C^*(E')$ will be stably isomorphic.
\end{remark}

We conclude this section with an example showing an application of
Theorem~\ref{clasimp} to $C^*$-algebras with multiple proper ideals.

\begin{example} Consider the two graphs
\[ \qquad \xymatrix{ & & & v \\ E & \ar@(ur, ul)[] \ar@(dr, dl)[]
\ar[rru] \ar[rrd] x & & \\ & & & w } \qquad \qquad \qquad
\qquad\xymatrix{ & & & v' \\ E' & \ar@(ur, ul)[] \ar@(dr, dl)[]
\ar@/^/[rru] \ar@/_/[rru] \ar@/^/[rrd] \ar@/_/[rrd] x' & & \\ & & & w'
}
\] The ideal $I := I_{\{v,w\}}$ in $C^*(E)$ is a largest proper ideal
that is an AF-algebra, and the six-term exact sequence corresponding
to $$0 \to I \to C^*(E) \to C^*(E)/I \to 0$$ is $$0 \to \Z \oplus \Z
\to \coker ( \left[ \begin{smallmatrix} 1 \\ 1 \\ 1 \end{smallmatrix}
\right] ) \to 0$$ where the middle map is $[(x,y)] \mapsto [(x,y,0)]$.
Likewise, the ideal $I' := I_{\{v',w'\}}$ in $C^*(E')$ is a largest
proper ideal that is an AF-algebra, and the six-term exact sequence
corresponding to $$0 \to I' \to C^*(E') \to C^*(E')/I' \to 0$$ is $$0
\to \Z \oplus \Z \to \coker ( \left[ \begin{smallmatrix} 2 \\ 2 \\
1 \end{smallmatrix} \right] ) \to 0$$ where the middle map is $[(x,y)]
\mapsto [(x,y,0)]$.  If we define $\beta : \coker (
\left[ \begin{smallmatrix} 1 \\ 1 \\ 1 \end{smallmatrix} \right] ) \to
\coker ( \left[ \begin{smallmatrix} 2 \\ 2 \\ 1 \end{smallmatrix}
\right] )$ by $\beta[(x,y,z)] = [(x+z, y+z, z)]$, then we see that the
diagram
\[ \xymatrix{ 0 \ar[r] & \Z \oplus \Z \ar[d]_{\operatorname{Id}}
\ar[r] & \coker{(\left[\begin{smallmatrix}1\\1
\\1\end{smallmatrix}\right])} \ar[d]_{\beta} \ar[r] & 0 \\ 0 \ar[r] &
\Z \oplus \Z \ar[r] &\coker{(\left[\begin{smallmatrix}2\\2
\\1\end{smallmatrix}\right])} \ar[r] & 0 }
\] commutes.  An application of the five lemma shows that $\beta$ is
an isomorphism.  It follows from Theorem~\ref{clasimp} that $C^*(E)
\otimes \KKK \cong C^*(E') \otimes \KKK$.

\end{example}

In the examples above, both connecting maps in the six-term exact sequences vanish.  Since all $C^*$-algebras considered (and, more generally, all graph $C^*$-algebras satisfying Condition~(K)) have real rank zero, the exponential map $\partial : K_0(A/I) \to K_1(I)$ is always zero.  However, the index map $\partial : K_1(A/I) \to K_0(I)$ does not necessarily vanish and may carry important information.  In forthcoming work, the authors and Carlsen explain how to compute this map for graph $C^*$-algebras.

\section{Stability of ideals} \label{stab-ideals-sec}

In this section we prove that if $A$ is a graph $C^*$-algebra that is
not an AF-algebra, and if $A$ contains a unique proper nontrivial
ideal $I$ , then $I$ is stable.

\begin{definition} If $v$ is a vertex in a graph $E$ we define $$L(v)
:= \{w \in E^0 : \text{ there is a path from $w$ to $v$} \}.$$ We say
that $v$ is \emph{left infinite} if $L(v)$ contains infinitely many
elements.
\end{definition}

\begin{definition} If $E = (E^0, E^1, r, s )$ is a graph, then a
\emph{graph trace} on $E$ is a function $g : E^0 \rightarrow
[0,\infty)$ with the following two properties:
\begin{enumerate}
\item \label{g-t-1} For any $v \in G^0$ with $0 < |s^{-1}(v)| <
\infty$ we have $g(v) = \sum_{s(e) = v} g(r(e))$.
\item \label{g-t-2} For any infinite emitter $v \in G^0$ and any
finite set of edges $e_1, \ldots, e_n \in s^{-1}(v)$ we have $g(v)
\geq \sum_{i=1}^n g(r(e_i))$.
\end{enumerate} We define the \emph{norm} of a graph trace $g$ to be
the (possibly infinite) quantity $\| g \| := \sum_{v \in E^0} g(v)$,
and we say a graph trace $g$ is \emph{bounded} if $\| g \| < \infty$.
\end{definition}

\begin{lemma} \label{stable-l-i-lem} Let $E$ be a graph such that
$C^*(E)$ is simple.  If there exists $v \in E^0$ such that $v$ is left
infinite, then $C^*(E)$ is stable.
\end{lemma}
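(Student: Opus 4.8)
The plan is to prove the trace-theoretic reformulation that $E$ admits no nonzero bounded graph trace, and then to invoke the characterization of stability for graph $C^*$-algebras in terms of graph traces (\cite{jh:piscgds}), by which $C^*(E)$ is stable exactly when $E$ carries no nonzero bounded graph trace. Throughout I use that $C^*(E)$ has the increasing countable approximate unit of projections $\{\sum_{v\in F}p_v\}$ indexed by finite $F\subseteq E^0$, so the hypotheses of that characterization (and of Lemma~\ref{stab-equiv}) are in force.

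So let $g:E^0\to[0,\infty)$ be a bounded graph trace and let $v_0$ be a left-infinite vertex. The first step is to show $g(w)\ge g(v_0)$ for every $w\in L(v_0)$. Fix a path $w=u_0\to u_1\to\cdots\to u_k=v_0$. For $i<k$ the vertex $u_i$ emits the next edge of the path, so it is not a sink; if $u_i$ is regular, the first defining condition of a graph trace gives $g(u_i)=\sum_{s(e)=u_i}g(r(e))\ge g(u_{i+1})$, while if $u_i$ is an infinite emitter the second condition, applied to the single emitted edge lying on the path, again gives $g(u_i)\ge g(u_{i+1})$. Chaining these inequalities yields $g(w)\ge g(v_0)$.

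Since $L(v_0)$ is infinite, boundedness now forces $g(v_0)=0$, for otherwise $\|g\|\ge\sum_{w\in L(v_0)}g(w)\ge|L(v_0)|\,g(v_0)=\infty$. The second step upgrades this to $g\equiv0$ using simplicity. Set $Z:=\{w\in E^0:g(w)=0\}$. If $w\in Z$ and $w\ge u$, then the monotonicity just established along any path from $w$ to $u$ gives $0=g(w)\ge g(u)\ge0$, so $u\in Z$ and $Z$ is hereditary; and if $v\in E^0_{\mathrm{reg}}$ has $r(s^{-1}(v))\subseteq Z$, then the first defining condition gives $g(v)=\sum_{s(e)=v}g(r(e))=0$, so $v\in Z$ and $Z$ is saturated. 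Since $C^*(E)$ is simple it has no saturated hereditary subset other than $\emptyset$ and $E^0$, and as $v_0\in Z$ we conclude $Z=E^0$, i.e.\ $g\equiv0$. Thus $E$ has no nonzero bounded graph trace and $C^*(E)$ is stable.

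The main obstacle is not the (elementary) trace computation but securing the stability-versus-bounded-trace characterization in the full generality of arbitrary, possibly non-row-finite, graphs. To avoid any dependence on it one can instead argue entirely from results established above, splitting along the dichotomy for simple graph $C^*$-algebras. In the purely infinite case, left-infiniteness makes $E^0$ infinite, hence $C^*(E)$ is nonunital and therefore stable by Zhang's dichotomy. In the AF case one verifies condition (iii) of Lemma~\ref{stab-equiv} for the projections $Q_n:=\sum_{i\le n}p_{v_i}$: given $n$, choose $N$ with $[Q_n]\le N[p_{v_0}]$ in the simple dimension group $K_0(C^*(E))$ (where $[p_{v_0}]$ is an order unit), pick $N$ distinct vertices $w_1,\dots,w_N\in L(v_0)$ outside $F_n$ together with paths $\mu_j:w_j\to v_0$, and set $q_j:=s_{\mu_j}s_{\mu_j}^*\le p_{w_j}$; these are mutually orthogonal with $q_j\sim p_{v_0}$, so by Lemma~\ref{sum-equiv-projs} their sum is equivalent to $N$ copies of $p_{v_0}$ and hence dominates $Q_n$, while $\sum_j q_j\le Q_m-Q_n$ once $F_m\supseteq F_n\cup\{w_1,\dots,w_N\}$.
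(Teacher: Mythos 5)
Your first route, as literally stated, rests on a false citation: absence of nonzero bounded graph traces does \emph{not} characterize stability of $C^*(E)$. The graph with one vertex and two loops has no nonzero graph trace (condition (1) forces $g(v)=2g(v)$), yet its algebra $\mathcal{O}_2$ is unital, hence nonstable. The correct characterization, which is what the paper actually invokes (\cite[Theorem~3.2(d)]{mt:sgcg}), requires \emph{in addition} that every vertex lying on a cycle be left infinite, and the paper's proof spends its opening sentences securing precisely that missing condition: simplicity gives cofinality of $E$ by \cite[Corollary~2.15]{ddmt:cag}, so $v$ reaches every cycle and every vertex on a cycle inherits left infiniteness from $v$. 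Your trace computation itself (monotonicity of $g$ along paths, $g(v_0)=0$ from boundedness and $|L(v_0)|=\infty$, and the zero set being saturated hereditary, hence equal to $E^0$ by simplicity) coincides with the paper's argument, except that you verify the saturated hereditary property of the zero set by hand where the paper cites \cite[Lemma~3.7]{mt:sgcg}; to that extent you correctly flagged where the real weight of the citation lies, though the issue is not row-finiteness but the omitted cycle condition.

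Your fallback argument, however, is a complete and genuinely different proof, and it is what makes the proposal correct overall. Splitting along the dichotomy for simple graph $C^*$-algebras (\cite[Remark~2.16]{ddmt:cag}), the purely infinite case is disposed of exactly as the paper itself does elsewhere (Case II of Proposition~\ref{stable-or-AF-ideal}): $L(v)$ infinite forces $E^0$ infinite, hence $C^*(E)$ is nonunital and Zhang's theorem applies. In the AF case your verification of condition (iii) of Lemma~\ref{stab-equiv} is sound: the $q_j=s_{\mu_j}s_{\mu_j}^*$ are mutually orthogonal because they sit under the vertex projections of distinct vertices $w_j$, each $q_j\sim p_{v_0}$, the order-unit property of $[p_{v_0}]$ in the simple dimension group $K_0(C^*(E))$ supplies $N$ with $[Q_n]\le N[p_{v_0}]=[\sum_j q_j]$, and Lemma~\ref{sum-equiv-projs} plus the standard (but tacitly used) fact that in an AF algebra $[p]\le[q]$ implies $p\lesssim q$ (cancellation) yields $Q_n\lesssim \sum_j q_j\le Q_m-Q_n$. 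What this buys is independence from the stability characterization for arbitrary graphs altogether, at the cost of invoking dimension-group order theory and AF comparison; the paper's route is shorter given the cited machinery, and has the further merit that its cofinality observation is exactly the ingredient your first route was missing.
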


\begin{proof} Since $C^*(E)$ is simple, it follows from
\cite[Corollary~2.15]{ddmt:cag} that $E$ is cofinal.  Therefore, the
vertex $v$ can reach every cycle in $E$, and any vertex that is on a
cycle in $E$ is left infinite.  In addition, if $g : E^0 \to
[0,\infty)$ is a bounded graph trace on $E$, then since $v$ is left
infinite, it follows that $g(v) = 0$.  Furthermore, it follows from
\cite[Lemma~3.7]{mt:sgcg} that $$H := \{ w \in E^0 : g(w) = 0 \}$$ is
a saturated hereditary subset of vertices.  Since $C^*(E)$ is simple,
it follows from \cite[Theorem~3.5]{ddmt:cag} that the only saturated
hereditary subsets of $E$ are $E^0$ and $\emptyset$.  Because $v \in
H$, we have that $H \neq \emptyset$ and hence $H = E^0$, which implies
that $g \equiv 0$.  Since we have shown that every vertex on a cycle
in $E$ is left infinite, and that there are no nonzero bounded graph
traces on $E$, it follows from \cite[Theorem~3.2(d)]{mt:sgcg} that
$C^*(E)$ is stable.
\end{proof}

\begin{proposition} \label{stable-or-AF-ideal} Let $E$ be a graph such
that $C^*(E)$ contains a unique proper nontrivial ideal $I$, and let
$\{ E^0, H, \emptyset \}$ be the saturated hereditary subsets of $E$.
Then there are two possibilities:
\begin{enumerate}
\item[(1)] The ideal $I$ is stable; or
\item[(2)] The graph $C^*$-algebra $C^*(E)$ is a nonunital AF-algebra,
and $H$ is infinite.
\end{enumerate}
\end{proposition}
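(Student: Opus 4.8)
The plan is to prove the contrapositive: assuming $I$ is \emph{not} stable, I would deduce that $C^*(E)$ is a nonunital AF-algebra and that $H$ is infinite. First I record the structure from Lemma~\ref{E-structure-lem}: $E$ satisfies Condition~(K), $B_H=\emptyset$, and $I=I_H$. Since the only ideals of $C^*(E)$ are $0$, $I$, and $C^*(E)$, any ideal of $I$ is an ideal of $C^*(E)$, so $I$ is simple; moreover $I$ is nonunital, for a unital ideal is a direct summand $C^*(E)=I\oplus J$ with $J$ an ideal, and none of the possibilities $J\in\{0,I,C^*(E)\}$ is compatible with $I$ being proper and nontrivial. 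By Remark~\ref{cases-remark} the simple algebra $I$ is purely infinite or AF. If $I$ were purely infinite then, being simple and nonunital, it would be stable by Zhang's dichotomy, against our assumption. Hence $I$ is a separable, simple, nonunital AF-algebra.

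For such an algebra stability is equivalent to the absence of a nonzero bounded trace, so my hypothesis produces a nonzero bounded trace $\tau$ on $I$. (One may take this as a standard fact, or derive it by realizing $I$ as a graph algebra and applying the graph-trace criterion used in Lemma~\ref{stable-l-i-lem}.) The one combinatorial tool I would use is: \emph{if $w\in H$ carries infinitely many mutually orthogonal projections in $I$, each equivalent to $p_w$, then $\tau(p_w)=0$.} Indeed, if $r_1,r_2,\dots\in I$ are such copies, then $q_N:=\sum_{k=1}^N r_k$ is a projection with $\tau(q_N)=N\,\tau(p_w)\le\|\tau\|$, forcing $\tau(p_w)=0$; since a nonzero bounded trace on a simple \cstar-algebra is faithful, this gives $p_w=0$, a contradiction, so no such $w$ can exist. (Equivalently, mirroring Lemma~\ref{stable-l-i-lem}, the vertex set $\{v\in H:\tau(p_v)=0\}$ is saturated hereditary, hence empty or all of $H$.)

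I would then rule out cycles and finiteness of $H$ with this tool. If $E$ had a cycle, it would lie in $E\setminus H$ (as $E_H$ is acyclic because $I$ is AF), so $C^*(E)/I\cong C^*(E\setminus H)$ would be simple with a cycle, hence purely infinite; Lemma~\ref{cycle-exists-lem} then supplies a cycle $\gamma$ in $E\setminus H$ and an edge $f$ with $r(f)=:w\in H$. The paths $\gamma^n f$ ($n\ge1$) all end at $w$, so the $s_{\gamma^n f}s_{\gamma^n f}^{*}\in I$ are infinitely many orthogonal copies of $p_w$ --- exactly the forbidden configuration. Thus $E$ has no cycle and $C^*(E)$ is an AF-algebra. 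Next, were $H$ finite, then $I$ (being nonunital) is not finite-dimensional, so $I_H=\overline{\operatorname{span}}\{s_\alpha s_\beta^{*}:r(\alpha)=r(\beta)\in H\}$ is infinite-dimensional and some $w\in H$ is the range of infinitely many paths; since $E$ has no cycles these paths pairwise fail to extend one another, so the projections $s_\alpha s_\alpha^{*}$ with $r(\alpha)=w$ are again infinitely many orthogonal copies of $p_w$. Hence $H$ is infinite, whence $E^0\supseteq H$ is infinite and $C^*(E)$ is nonunital. This is precisely conclusion~(2).

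The crux is the passage from non-stability to a usable trace; everything downstream is an application of the orthogonal-copies observation together with Lemmas~\ref{E-structure-lem} and~\ref{cycle-exists-lem}. Invoking the trace characterization of stability for simple AF-algebras makes this immediate. If instead one insists on staying within the toolkit of Section~\ref{prelims-sec}, the difficulty migrates to proving directly, via Lemma~\ref{stab-equiv}, that $I$ is stable whenever some $w\in H$ carries infinitely many orthogonal copies of $p_w$; this is the genuinely technical point. Unlike the full-corner situation of Proposition~\ref{getcorner}, one must construct an increasing approximate unit of projections for the whole ideal $I$ and, using simplicity to dominate an arbitrary approximate-unit projection by a large multiple of $p_w$, verify condition~(iii) of Lemma~\ref{stab-equiv}; handling arbitrary projections rather than the explicit $p_w$ and $s_{\alpha_k}s_{\alpha_k}^{*}$ of Proposition~\ref{getcorner} is what requires the most care.
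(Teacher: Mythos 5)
Your argument is correct, but it takes a genuinely different route from the paper's. The paper realizes $I\cong C^*({}_HE_\emptyset)$ via \cite[Lemma~1.6]{kdjhhsw:srga} and runs a direct trichotomy ($H$ finite; $H$ infinite with a cycle; $H$ infinite and $E$ acyclic), concluding stability in the first two cases either from Zhang's theorem (cycle inside $H$) or from Lemma~\ref{stable-l-i-lem} by exhibiting a left infinite vertex of ${}_HE_\emptyset$; notably, in the cycle-outside-$H$ case it builds the paths $\beta,\alpha\beta,\alpha^2\beta,\dots$ directly from Lemma~\ref{E-structure-lem}(5) rather than invoking Lemma~\ref{cycle-exists-lem} as you do. You instead argue the contrapositive: Zhang's dichotomy disposes of a purely infinite ideal, and for the remaining simple nonunital AF ideal you convert non-stability into a nonzero bounded trace and then forbid ``infinitely many orthogonal copies of $p_w$,'' fed by Lemma~\ref{cycle-exists-lem} in the cycle case and by pigeonhole when $H$ is finite. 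The two mechanisms are close cousins: infinitely many orthogonal copies of $p_w$ realized by paths ending at $w$ is precisely left infiniteness of $w$ in ${}_HE_\emptyset$, and your trace-vanishing computation is the one inside the proof of Lemma~\ref{stable-l-i-lem}. What your route buys is a cleaner logical shape, at the price of the crux fact that a simple nonunital AF algebra with no nonzero bounded trace is stable --- that fact is true but external to the paper's citation list, so your in-paper fallback matters; it is sound, but note that the graph-trace criterion behind Lemma~\ref{stable-l-i-lem} hands you a graph trace on ${}_HE_\emptyset$ rather than a $C^*$-trace on $I$, so your two forbidden configurations must be translated into left infiniteness of $w$: this is immediate for the paths $\gamma^n f$ (they lie in $F_H$), whereas the pigeonhole paths may enter $H$ before reaching $w$ and should be split at their first entry into $H$, using that $H$ is finite and, at that stage, acyclic. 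After this translation the fallback essentially reproduces the paper's Cases I and II, which is worth recognizing. Two small verifications you leave implicit do check out: the projections $s_{\gamma^n f}s_{\gamma^n f}^*$ are mutually orthogonal because $f\neq\gamma_1$ (one has range in $H$, the other does not, so no path in the family extends another), and the partial isometries implementing $s_\alpha s_\alpha^*\sim p_w$ lie automatically in $I$ since $v=v(v^*v)\in C^*(E)\,I\subseteq I$, so computing with a trace defined only on $I$ is legitimate.
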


\begin{proof} By Lemma~\ref{E-structure-lem}, we see that $E$ contains
a unique saturated hereditary subset $H$ not equal to either $E^0$ or
$\emptyset$, and also $I = I_H$.  In addition, it follows from
\cite[Lemma~1.6]{kdjhhsw:srga} that $I_H$ is isomorphic to the graph
$C^*$-algebra $C^*({}_HE_\emptyset)$, where ${}_HE_\emptyset$ is the
graph described in \cite[Definition~1.4]{kdjhhsw:srga}.  In
particular, if we let
$$F_H := \{ \alpha \in E^* : s(\alpha) \notin H, r(\alpha) \in H, \text{ and } r(\alpha_i) \notin H \text{ for $i < | \alpha |$} \}$$
\noindent then
$${}_HE_\emptyset^0 := H \cup F_H \quad \text{ and } \quad {}_HE_\emptyset^1 :=  \{e \in E^1 : s(e) \in H \} \cup \{ \overline{\alpha} : \alpha \in F_H \}$$
where $s(\overline{\alpha}) = \alpha$, $r(\overline{\alpha}) =
r(\alpha)$, and the range and source of the other edges is the same as
in $E$.  Note that since $I$ is the unique proper nontrivial ideal in
$C^*(E)$, we have that $I \cong C^*({}_HE_\emptyset)$ is simple.

Consider three cases.

\noindent \textsc{Case I:} $H$ is finite.

Choose a vertex $v \in E^0 \setminus H$.  By
Lemma~\ref{E-structure-lem} $v$ is not a sink in $E$, and thus there
exists an edges $e_1 \in E^1$ with $s(e_1) = v$ and $r(e_1) \notin H$.
Continuing inductively, we may produce an infinite path $e_1e_2e_3
\ldots$ with $r(e_i) \notin H$ for all $i$.  (Note that the vertices
of this infinite path need not be distinct.)  We shall show that for
each $i$ there is a path from $r(e_i)$ to a vertex in $H$.  Fix $i$,
and let
$$X := \{ w \in E^0 : \text{ there is a path from $r(e_i)$ to $w$} \}.$$  Then $X$ is a nonempty hereditary subset, and by Lemma~\ref{E-structure-lem} it follows that $X \cap H \neq \emptyset$.  Thus there is a path from $r(e_i)$ to a vertex in $H$.  Since this is true for all $i$, it must be the case that $F_H$ is infinite.  In the graph ${}_HE_\emptyset$ there is an edge from each element of $F_H$ to an element in $H$.  Since $H$ is finite, this implies that there is a vertex in $H \subseteq {}_HE_\emptyset^0$ that is reached by infinitely many vertices, and hence is left infinite.  It follows from Lemma~\ref{stable-l-i-lem} that $I \cong C^*({}_HE_\emptyset)$ is stable.  Thus we are in the situation described in (1).

\noindent \textsc{Case II:} $H$ is infinite, and $E$ contains a cycle.

Let $\alpha = \alpha_1\ldots \alpha_n$ be a cycle in $E$.  Since $H$
is hereditary, the vertices of $\alpha$ must either all lie outside of
$H$ or all lie inside of $H$.  If the vertices all lie in $H$, then
the graph ${}_HE_\emptyset$ contains a cycle, and since
$C^*({}_HE_\emptyset)$ is simple, the dichotomy for simple graph
$C^*$-algebras \cite[Remark~2.16]{ddmt:cag} implies that
$C^*({}_HE_\emptyset)$ is purely infinite.  Since $H$ is infinite, it
follows that ${}_HE_\emptyset^0$ is infinite and
$C^*({}_HE_\emptyset)$ is nonunital.  Because $C^*({}_HE_\emptyset)$
is a simple, separable, purely infinite, and nonunital $C^*$-algebra,
Zhang's Theorem \cite{sz:dpmamc} implies that $I \cong
C^*({}_HE_\emptyset)$ is stable.  Thus we are in the situation
described in (1).

If the vertices of $\alpha$ all lie outside $H$, then the set $$X :=
\{ w \in E^0 : \text{ there is a path from $r(\alpha_n)$ to $w$} \}$$
is a nonempty hereditary set.  It follows from
Lemma~\ref{E-structure-lem} that $X \cap H \neq \emptyset$.  Thus
there exists a vertex $v \in H$ and a path $\beta$ from $r(\alpha_n)$
to $v$ with $r(\beta_i) \notin H$ for $i < | \beta |$.  Consequently
there are infinitely many paths in $F_H$ that end at $v$ (viz.~
$\beta, \alpha\beta, \alpha \alpha \beta, \alpha\alpha\alpha\beta,
\ldots$).  Hence there are infinitely many vertices in
${}_HE_\emptyset$ that can reach $v$, and $v$ is a left infinite
vertex in ${}_HE_\emptyset$.  It follows from
Lemma~\ref{stable-l-i-lem} that $I \cong C^*({}_HE_\emptyset)$ is
stable.  Thus we are in the situation described in (1).

\noindent \textsc{Case III:} $H$ is infinite, and $E$ does not contain
a cycle.

Since $E$ does not contain a cycle, it follows from
\cite[Corollary~2.13]{ddmt:cag} that $C^*(E)$ is an AF-algebra.  In
addition, since $H$ is infinite it follows that $E^0$ is infinite and
$C^*(E)$ is nonunital.  Thus we are in the situation described in (2).
\end{proof}

\begin{corollary}\label{usenext} If $E$ is a graph with a finite
number of vertices and such that $C^*(E)$ contains a unique proper
nontrivial ideal $I$, then $I$ is stable.  Furthermore, if $\{E^0, H,
\emptyset \}$ are the saturated hereditary subsets of $E$, then
$C^*(E_H)$ is a unital $C^*$-algebra and $I \cong C^*(E_H) \otimes
\mathcal{K}$.
\end{corollary}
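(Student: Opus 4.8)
The plan is to read everything off from Proposition~\ref{stable-or-AF-ideal}, whose dichotomy already does the heavy lifting. By Lemma~\ref{E-structure-lem} the graph $E$ has exactly three saturated hereditary subsets $\{\emptyset, H, E^0\}$, with $B_H = \emptyset$ and $I = I_H$, so the hypotheses of Proposition~\ref{stable-or-AF-ideal} are satisfied. The one genuinely new input here is finiteness: since $E^0$ is finite and $H \subseteq E^0$, the set $H$ is finite. Possibility (2) of Proposition~\ref{stable-or-AF-ideal} requires $H$ to be infinite, so it is excluded, and we are forced into possibility (1). This immediately yields the first assertion, that $I$ is stable.

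For the \emph{furthermore} part, recall that $E_H$ is the subgraph of $E$ with vertex set $H$; since $H$ is finite, $\sum_{v \in H} p_v$ is a unit for $C^*(E_H)$, so $C^*(E_H)$ is unital. Because $B_H = \emptyset$, the ideal $I = I_H$ is strongly Morita equivalent to $C^*(E_H)$ by \cite[Proposition~3.4]{bhrs:iccig}. This is the same Morita equivalence already used in the proof of Proposition~\ref{getcorner}.

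Finally, I would upgrade this Morita equivalence to the stated isomorphism. Since $E$ is countable, both $I$ and $C^*(E_H)$ are separable, so the Brown--Green--Rieffel theorem turns strong Morita equivalence into stable isomorphism, $I \otimes \mathcal{K} \cong C^*(E_H) \otimes \mathcal{K}$. The step where the first part of the corollary gets used is precisely here: having shown that $I$ is stable, we have $I \cong I \otimes \mathcal{K}$, and therefore $I \cong I \otimes \mathcal{K} \cong C^*(E_H) \otimes \mathcal{K}$, as claimed. The only point requiring care is this last bookkeeping: Morita equivalence by itself gives only a stable isomorphism between $I$ and $C^*(E_H)$, and it is the stability of $I$ that promotes it to the clean isomorphism $I \cong C^*(E_H) \otimes \mathcal{K}$. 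I expect no substantive obstacle beyond invoking the dichotomy of Proposition~\ref{stable-or-AF-ideal} correctly and keeping track of this stabilization.
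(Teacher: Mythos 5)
Your proposal is correct and follows essentially the same route as the paper: exclude possibility (2) of Proposition~\ref{stable-or-AF-ideal} by finiteness (you do it via $H$ being finite, the paper via $C^*(E)$ being unital --- both immediate from $|E^0|<\infty$), then combine the Morita equivalence $I = I_H \sim_M C^*(E_H)$ with separability and the stability of $I$ to get $I \cong I \otimes \mathcal{K} \cong C^*(E_H) \otimes \mathcal{K}$. Your citation of \cite[Proposition~3.4]{bhrs:iccig} (valid here since $B_H=\emptyset$) in place of the paper's reference to \cite{tbdpirws:crg} is an inessential variation, and your closing remark correctly identifies the one point of care, namely that stability of $I$ is what upgrades stable isomorphism to the stated isomorphism.
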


\begin{proof} Since $E^0$ is finite it is the case that $C^*(E)$ is
unital, and it follows from Proposition~\ref{stable-or-AF-ideal} that
$I$ is stable.  Furthermore, since $I = I_H$ it follows from
\cite[Theorem~4.1]{tbdpirws:crg} and
\cite[Proposition~3.4]{tbdpirws:crg} that $I$ is Morita equivalent to
$C^*(E_H)$.  Since $I$ and $C^*(E_H)$ are separable, it follows that
$I$ and $C^*(E_H)$ are stably isomorphic.  Thus $I \cong I \otimes
\mathcal{K} \cong C^*(E_H) \otimes \mathcal{K}$.  Finally, since
$E_H^0 = H \subseteq E^0$ is finite, $C^*(E_H)$ is unital.
\end{proof}

%

\newcommand{\arxiv}[2]{\href{#2}{#1}}

\bibliographystyle{amsalpha}

\begin{thebibliography}{BPRS00}

\bibitem[BHRS]{bhrs:iccig} T.~Bates, J.~H. Hong, I.~Raeburn, and
W.~Szymanski, \emph{The ideal structure of the {$C^*$}-algebras of
infinite graphs}, Illinois J.~Math \textbf{46}  (2002), 1159--1176.

\bibitem[BPRS00]{tbdpirws:crg} T.~Bates, D.~Pask, I.~Raeburn, and
W.~Szymanski, \emph{{$C^*$}-algebras of row-finite graphs}, New York
J. Math. \textbf{6} (2000), 307--324.

\bibitem[Bro81]{bro:eaplp} L.~Brown, Extensions of AF-algebras; the
projection lifting problem, Operator Algebras and Applications, R. V
Kadison (ed.), Proc. Symp. Pure Math 38, Amer. Math. Soc., Providence,
RI (1981), 175Ð176.

\bibitem[DHS03]{kdjhhsw:srga} K.~Deicke, J.H. Hong, and W.~Szymanski,
\emph{Stable rank of graph algebras.  type {I} graph algebras and
their limits}, Indiana Univ. Math. J. \textbf{52} (2003), 963--979.

\bibitem[DT02]{ddmt:ckegc} D.~Drinen and M.~Tomforde, \emph{Computing
{$K$}-theory and {Ext} for graph {$C^*$}-algebras}, Illinois
J. Math. \textbf{46} (2002), 81--91.

\bibitem[DT05]{ddmt:cag} D.~Drinen and M.~Tomforde, \emph{The
{$C^*$}-algebras of arbitrary graphs}, Rocky Mountain
J.~Math. \textbf{35} (2005), 105--135.

\bibitem[Eff81]{ege:dca} E.G. Effros, \emph{Dimensions and
{$C^*$}-algebras}, CBMS Regional Conf.\ Ser.\ in Math., no.~46,
American Mathematical Society, 1981.

\bibitem[Ell76]{gae:cilssfa} G.A. Elliott, \emph{On the classification
of inductive limits of sequences of semisimple finite-dimensional
algebras}, J. Algebra \textbf{38} (1976), no.~1, 29--44.
  
\bibitem[Ell76b]{gae:admic} G.A. Elliott, \emph{Automorphisms
determined by multipliers on ideals of a $C^*$-algebra},
J.~Funct.~Anal. \textbf{23} (1976), 1--10.

\bibitem[ER]{eilres}
{S.~Eilers and G.~Restorff}, {\em On {R\o rdam's} classification
  of certain {$C^*$}-algebras   with one nontrivial ideal}, 
{Operator algebras: The Abel symposium 2004}. 
{Abel Symposia}, \textbf{1}, pp.~{87--96}. {Springer-Verlag}, 2006.


\bibitem[ERR]{segrer:cecc} S.~Eilers, G.~Restorff, and E.~Ruiz,
\emph{Classification of extensions of classifiable {$C^*$}-algebras},
preprint, \arxiv{ArXiv:math.OA/0606688v2}{http://arxiv.org/abs/math/0606688}.

\bibitem[Hje01]{jh:piscgds} J.~Hjelmborg, \emph{Purely infinite and
stable $C^*$-algebras of graphs and dynamical systems}, Ergodic
Theory Dynam. Systems \textbf{21} (2001), 1789--1808.

\bibitem[Hr98]{jhmr:sc} J.~Hjelmborg and M.~R\o rdam, \emph{On
stability of {$C^*$}-algebras}, J.  Funct. Anal. \textbf{155} (1998),
153--170.

\bibitem[KST]{kst:rafagaelua}
T.~Katsura, A.~Sims, and M.~Tomforde, \emph{Realizations of AF-algebras as graph algebras, Exel-Laca algebras, and ultragraph algebras}, preprint,
\arxiv{arXiv:0809.0164v2}{http://arxiv.org/abs/0809.0164v2}.


\bibitem[Kir]{kir:cpickt} E.~Kirchberg, The Classification of Purely
Infinite $C^*$-algebras using Kasparov's Theory, to appear in the
Fields Institute Communications series.

\bibitem[KN06]{dkpwn:cfpaue} D.~Kucerovsky and P.~W. Ng, \emph{The
corona factorization property and approximate unitary equivalence},
Houston J. Math. \textbf{32} (2006), no.~2, 531--550
(electronic). 

\bibitem[Lin91]{hl:sccssca} H.~Lin, \emph{Simple {$C^*$}-algebras with
continuous scales and simple corona algebras}, Proc.\ Amer.\ Math.\
Soc. \textbf{112} (1991), no.~3, 871--880.

\bibitem[MN]{mn:cotpfkt} R.~Meyer and R.~Nest, \emph{$C^*$-algebras
over topological spaces: filtrated $K$-theory}, preprint, \arxiv{arXiv:0810.0096}{http://arxiv.org/abs/0810.0096}

\bibitem[Ng]{pwn:cfp} P.W. Ng, \emph{The corona factorization
property}, preprint, \arxiv{arXiv:math/0510248v1}{http://arxiv.org/abs/math/0510248}.

\bibitem[Phi00]{phi:ctnpisc} N.~C.~Phillips, \emph{A classification
theorem for nuclear purely infinite simple $C^*$-algebras}, Doc. Math.
\textbf{5} (2000), 49--114.

\bibitem[RTW]{rtw:ctcgws}
I.~Raeburn, M.~Tomforde, and D.~P.~Williams, \emph{Classification
theorems for the $C^*$-algebras of graphs with sinks},
Bull.~Austral.~Math.~Soc. \textbf{70} (2004), 143--161.

\bibitem[Rae05]{rae:ga} I.~Raeburn, \emph{Graph algebras}. CBMS
Regional Conference Series in Mathematics, \textbf{103}, Published for
the Conference Board of the Mathematical Sciences, Washington, DC; by
the American Mathematical Society, Providence, RI, 2005. vi+113 pp.

\bibitem[Res06]{gr:cckasi} G.~Restorff, \emph{Classification of
{Cuntz-Krieger} algebras up to stable isomorphism}, J.\ Reine Angew.\
Math. (2006), no.~598, 185--210.

\bibitem[RR07]{grer:rccconiII} G.~Restorff and E.~Ruiz, \emph{On {R\o
rdam's} classification of certain {$C^*$}-algebras with one nontrivial
ideal {II}}, {Math. Scand.} \textbf{101} (2007), 280--292.
  
\bibitem[R\o r97]{Ror:ceccsteskt} M.~R\o rdam, \emph{Classification of
extensions of certain $C^*$-algebras by their six term exact sequences
in $K$-theory}, Math.~Ann. \textbf{308} (1997), 93--117.

\bibitem[Tom01]{tom:ecefcgws}
M.~Tomforde, \emph{Ext classes and embeddings for $C^*$-algebras of
graphs with sinks}, New York J. Math. \textbf{7} (2001),
233--256. 

\bibitem[Tom02]{Tom-thesis} M.~Tomforde, \emph{Extensions of graph $C^*$-algebras},  Ph.D. Thesis, Dartmouth College, 2002, available at \url{http://www.math.uh.edu/~tomforde/thesis.html}

\bibitem[Tom03]{tom:cefga}
M.~Tomforde, \emph{Computing Ext for graph algebras}, J. Operator
Theory \textbf{49} (2003), 363--387. 

\bibitem[Tom04]{tom:socatg}
M.~Tomforde, \emph{Stability of $C^*$-algebras associated to
graphs}, Proc.~Amer.~Math.~Soc. \textbf{132} (2004),
1787--1795. 





\bibitem[Tom06]{mt:sgcg} M.~Tomforde, \emph{Structure of graph
{$C^*$}-algebras and their generalizations}, Graph Algebras: Bridging
the gap between analysis and algebra (Gonzalo~Aranda Pino,
Francesc~Perera Dom\`enech, and Mercedes~Siles Molina, eds.), Servicio
de Publicaciones de la Universidad de M\'alaga, M\'alaga, Spain, 2006.



\bibitem[Zha90]{sz:dpmamc} S.~Zhang, \emph{Diagonalizing projections
in multiplier algebras and in matrices over a {$C^*$}-algebra},
Pacific J. Math. \textbf{145} (1990), no.~1, 181--200.

\end{thebibliography}

\end{document}